\title{Orientation of convex sets}
\author[1,2]{P\'eter \'Agoston}
\author[1,2]{G\'abor Dam\'asdi\thanks{Supported by the \'{U}NKP-21-3 New National Excellence Program of the Ministry for Innovation and Technology from the source of the National Research, Development and Innovation fund.}}
\author[1,2]{Bal\'azs Keszegh\thanks{Research supported by the Lend\"ulet program of the Hungarian Academy of Sciences (MTA), under the grant LP2017-19/2017, by the J\'anos Bolyai Research Scholarship of the Hungarian Academy of Sciences, by the National Research, Development and Innovation Office -- NKFIH under the grant K 132696 and FK 132060 and by the \'UNKP-20-5 New National Excellence Program of the Ministry for Innovation and Technology from the source of the National Research, Development and Innovation Fund.}}
\author[1,2]{D\"om\"ot\"or P\'alv\"olgyi}
\affil[1]{ELTE E\"{o}tv\"{o}s Lor\'{a}nd University, Budapest
}
\affil[2]{Alfr\'{e}d R\'{e}nyi Institute of Mathematics, Budapest
}
\date{}
\begin{document}

\maketitle

\begin{abstract}
    We introduce a novel definition of orientation on the triples of a family of pairwise intersecting planar convex sets and study its properties.
    In particular, we compare it to other systems of orientations on triples that satisfy a so-called interiority condition: $\o(ABD)=\o(BCD)=\o(CAD)=1$ imply $\o(ABC)=1$ for any $A, B, C, D$.
    We call such an orientation a \PO (partial 3-order), a natural generalization of a poset, that has several interesting special cases.
    For example, the order type of a planar point set (that can have collinear triples) is a \PO, but not every \PO is the order type of some planar point set; a \PO that is realizable by points is called a \pPO.
    
    If the family is non-degenerate with respect to the orientation, i.e., always $\o(ABC)\ne 0$, we obtain a \TO (total 3-order).
    Contrary to linear orders, a \TO can have a rich structure. A \TO realizable by points, a \pTO, is the order type of a point set in general position.
    Despite these similarities to order types, \PO's and \TO's that can arise from the orientation of pairwise intersecting convex sets, denoted by \CPO and \CTO, turn out to be quite different from order types: there is no containment relation among the family of all \CPO's and the family of all \pPO's, or among the families of \CTO's and \pTO's.
    
    Finally, we study properties of these orientations if we also require that the family of the underlying convex sets satisfies the (4,3) property, as a first step towards obtaining better $(p,q)$-theorems.
\end{abstract}

\section{Introduction}\label{sec:introduction}

A family is \emph{intersecting} if any two members of the family intersect, and it is \emph{3-intersection-free} if no three members of the family have a common intersection. 
In order to better understand the intersection structure of planar convex sets, we will define an orientation of such sets (for a more detailed motivation, see the beginning of Chapter \ref{app:43}).
First, we need to define a few other notions.

Take a family of objects, $\mathcal{F}$, a fixed positive integer $k$, and a function $\o$, whose domain is the set of $k$-tuples of distinct members of the family and its range is the set $\left\lbrace-1,0,1\right\rbrace$.
If $\o$ furthermore satisfies the condition $\o\left(A_{\sigma(1)},...,A_{\sigma(k)}\right)=sgn(\sigma)\cdot \o\left(A_1,...,A_k\right)$ for any $A_1,...,A_k\in\mathcal{F}$ and any permutation $\sigma$ of $\{1,\dots,k\}$, then we call such an assignment a \emph{partial orientation of rank $k$}. In the special case, when $\o$ nowhere vanishes, i.e., it is never $0$, we call such an assignment a \emph{total orientation}. Such a relation is called an \emph{alternating sign map} in matroid theory, see for example \cite[Definition 3.5.3]{OM}). In Knuth \cite{Knuth}, these are called \emph{cyclic symmetry} and \emph{antisymmetry} axioms in the case of the orientation of three planar points.
We prefer the term orientation and, just like it is common in the case of standard orderings, we sometimes omit writing `total' or `partial' before it. We will also use the term \emph{orientation} of the ordered $k$-tuple $\left(A_1,...,A_k\right)$ for the value $\o\left(A_1,...,A_k\right)$, which we will oftentimes only denote by $\o\left(A_1...A_k\right)$.

Suppose that in an orientation of rank $k$ on the family $\mathcal{F}$, an ordered $(k+1)$-tuple $\left(A_1,A_2,...,A_k,B\right)$ from $\mathcal{F}$ has the following property:
There is a $\varepsilon\in\lbrace-1,+1\rbrace$ (that can depend on the $(k+1)$-tuple) such that
for all permutations $\sigma$ of $\left\lbrace1,...,k\right\rbrace$, $\o\left(A_{\sigma(1)},A_{\sigma(2)},...,A_{\sigma(k-2)},A_{\sigma(k-1)},B\right)=\varepsilon\cdot sgn(\sigma)$. If for every ordered $(k+1)$-tuple having the property described above, $\o\left(A_1,...,A_k\right)=\varepsilon$ also holds (since $\o$ is an orientation, this is equivalent to that $\o\left(A_{\sigma(1)},A_{\sigma(2)},...,A_{\sigma(k-2)},A_{\sigma(k-1)},A_{\sigma(k)}\right)=\varepsilon\cdot sgn(\sigma)$ for all permutations $\sigma$), we say that the orientation satisfies the interiority condition (following Knuth, who defined the interiority condition for $k=3$) and also that it is an \emph{order of rank $k$} (or simply a $k$-order). If it is a partial orientation (so in the general case), we call it a \emph{partial order of rank $k$} (partial $k$-order or P$k$O), while if it is a total orientation, we call it a \emph{total order of rank $k$} (total $k$-order of T$k$O). 

The above definitions can also be stated equivalently with a cyclic condition if we separate the cases when $k$ is odd and even (the equivalence follows from examining the possible permutations of the elements):

If $k$ is odd, an orientation of order $k$ on a family $\mathcal{F}$ fulfills the interiority condition if and only if for any $(k+1)$-tuple $\left(A_1,A_2,...,A_k,B\right)$, if $\o\left(A_1A_2...A_{k-1}B\right)=\o\left(A_2A_3...A_kB\right)=\o\left(A_3A_4...A_kA_1B\right)=...=\o\left(A_kA_1A_2...A_{k-2}B\right)=1$, then $\o\left(A_1A_2...A_k\right)=1$.

If $k$ is even, an orientation of order $k$ on a family $\mathcal{F}$ fulfills the interiority condition if and only if for any $(k+1)$-tuple $\left(A_1,A_2,...,A_k,B\right)$, if $\o\left(A_1A_2...A_{k-1}B\right)=-\o\left(A_2A_3...A_kB\right)=\o\left(A_3A_4...A_kA_1B\right)=...=-\o\left(A_kA_1A_2...A_{k-2}B\right)=1$, then $\o\left(A_1A_2...A_k\right)=1$.

For any ordered $(k+1)$-tuple $\left(A_1,...,A_k,B\right)$ satisfying the premise of the interiority condition for any $k$-order, we say that $B\in conv\left(A_1A_2...A_k\right)$.
It is easy to check that the definition of $conv$ is invariant to any permutation of the $A_i$ and that for the standard orientation of points in general position (described in the next paragraph) it coincides with the standard notion of $B\in conv\left(A_1A_2...A_k\right)$, however, it still does not necessarily satisfy all natural properties of convexity, as we will see it at the end of this section.

\smallskip
Note that a P2O is just an ordinary poset, since if $k=2$, then the interiority condition is equivalent to transitivity.
A T2O is just a total linear order.
A well-known example of an order of rank $d+1$ is the standard orientation of points in $\R^d$, given by the sign of the determinant of the $(d+1)\times(d+1)$ matrix whose rows are the coordinates of the points with an extra $1$ at the end. From now on, whenever we use $\o$ for points, like $\o(abc)$ for some $a,b,c\in\mathbb R^2$, we always mean the above defined orientation.
Also note that when using the notation for three sets, we will drop the commas from the notation.

Some specific P$k$O's, called chirotopes \cite{OM} and signotopes \cite{FW} have also been studied earlier.

Once an order of some rank has been fixed (or is implicitly understood) on a (typically finite) family $\mathcal F$, then the order induced by it is called the \emph{order type} of $\mathcal F$,
so the order type of a family of objects for us is just the underlying P$k$O.
Traditionally, this term has been used for point sets with the above defined $\o$; we use it in a more general sense, for any family (with a fixed order of an appropriate rank).
Note that in the literature order type is often only used for points in general position; we also allow the sign to be 0 in an order type, as in the more restrictive case we can emphasize that the order type is a T$k$O.
We will call two order types \emph{isomorphic} if one can be obtained from the other one using the permutation of the elements of its domain, and the flipping of the signs of all non-zero values assigned to the $k$-tuples ($+1$ to $-1$ and $-1$ to $+1$).
This latter operation, in the case of points in $\R^d$, corresponds to reflecting the point set to a hyperplane, so it is natural to identify two such order types, as was done in \cite{BFSchSchS}; all our studied P$k$O's will have a similar symmetry.
We also want to emphasize that two different types of objects with two differently defined implicit orders
can have the same order type; we only need that the signs of the orientations are the same under some permutation, possibly after a flip of signs.

In this work, we will focus on the rank $3$ case and assume $k=3$ for all considered orientations.
So we will call an order of rank $3$ a {(partial) $3$-order (P3O), and a {total $3$-order (T3O) if it does not take 0 anywhere. Recall that the interiority condition is equivalent to that if $\o(ABD)=\o(BCD)=\o(CAD)=1$, then $\o(ABC)=1$ holds for all ordered $4$-tuples from a $3$-order. 
Note that the orientation of a planar point triple is determined by the order in which the points follow each other on the boundary of their convex hull (triangle): it is $+1$ if the three points are in a counterclockwise order and $-1$ if they are in a clockwise order, while if the three points are collinear, it is $0$. 
A P3O that is the order type of a point set is denoted as \pPO, and such a T3O is a \pTO, so \pTO correspond to traditional point order types, with no collinearities.

\smallskip

Now we proceed to convex sets.
From now on, whenever we refer to a convex set, it is always assumed to be closed and planar, unless stated otherwise. 
Our goal is to define a P3O on pairwise intersecting plane convex sets.

Nerve complexes are a well-known notion in topology, introduced by Alexandrov \cite{Alexandrov} (originally for open sets). Such a complex belongs to any family $\mathcal{F}=\left\lbrace S_i\vert i\in I\right\rbrace$ of sets over a topological space. It is defined as an abstract simplicial complex in which each $S_i$ is represented by a vertex $v_i$ and any finite subset $\left\lbrace v_j\vert j\in J\right\rbrace$ ($\lvert J\rvert<\infty$, $J\subseteq I$) of them is a face if and only if all the elements of $\left\lbrace S_j\vert j\in J\right\rbrace$ have a common intersection. Several related theorems, called `nerve theorems' exist that show a connection between the topology of a family of sets and its nerve. Two well-known examples are Leray's nerve theorem \cite{Leray} and Borsuk's nerve theorem \cite{Borsuk}. A variant of the nerve theorem (see \cite[Theorem~4.2.2.]{Roll} and see also \cite[Theorem~3.9.]{BKRR} for a similar statement) states that the nerve belonging to a family of compact convex sets is homotopy equivalent to the union of said sets (note that a similar result for good covers, which are a generalization of families of convex sets, has been proven in \cite[Theorem~13.4.]{BT}). This shows that if three convex planar sets, $A$, $B$, $C$, form an intersecting and 3-intersection-free family, then $A\cup B\cup C$ is homotopy equivalent to $\mathbb S^1$, from which the Jordan curve theorem shows that $\R^2\setminus (A\cup B\cup C)$ has exactly one bounded component, called the \emph{hollow} of $ABC$, which we will denote by $\hollow(ABC)$ (see Figure \ref{fig:hollow}).
Lehel and T\'oth \cite{LT} have also shown that the convex hull of this hollow is a triangle with sides $a,b,c$, such that (apart from its endpoints) side $a$ is contained in $A\setminus(B\cup C)$, side $b$ in $B\setminus(A\cup C)$, and side $c$ in $C\setminus(A\cup B)$.
We may refer to the vertices of this triangle as the \emph{vertices of the hollow}, but note that since the hollow is open, its vertices are not a part of it, only of its closure.

The following lemma, which enables us to define the orientation of triples of pairwise intersecting convex sets, is a straightforward consequence of Lemma 1 in \cite{JKLPT}; this result inspired us to study such families in more detail and from a different perspective.

\begin{figure}[h]
	\centering
	\begin{tikzpicture}[line cap=round,line join=round,>=triangle 45,x=0.6cm,y=0.6cm]
	\clip(-2.8876996321438195,0.6977106489352843) rectangle (3.2664020944047905,6.710338772574716);
	\draw [rotate around={2.480632746197529:(0.18220833836858297,2.0508869860909624)},line width=1.pt] (0.18220833836858297,2.0508869860909624) ellipse (1.1792475838121659cm and 0.7235894076586996cm);
	\draw [rotate around={-55.86852534016869:(1.5970647868681078,4.28267436999626)},line width=1.pt] (1.5970647868681078,4.28267436999626) ellipse (1.3508296865320082cm and 0.6212549196030458cm);
	\draw [rotate around={63.57747534364856:(-1.2349837699919162,4.282674369996265)},line width=1.pt] (-1.2349837699919162,4.282674369996265) ellipse (1.3482265070678305cm and 0.618488709170679cm);
	\draw [line width=0.8pt,dash pattern=on 3pt off 3pt] (-0.7714031045025065,3.0800906302351887)-- (0.10599954967347891,5.143042881429993);
	\draw [line width=0.8pt,dash pattern=on 3pt off 3pt] (0.10599954967347891,5.143042881429993)-- (1.21979079672695,3.1034935523559617);
	\draw [line width=0.8pt,dash pattern=on 3pt off 3pt] (-0.7714031045025065,3.0800906302351887)-- (1.21979079672695,3.1034935523559617);
	\fill[line width=0pt,fill=black,fill opacity=0.1] (-0.7714031045025065,3.0800906302351887) -- (0.10599954967347891,5.143042881429993) -- (1.21979079672695,3.1034935523559617) -- cycle;
	\draw (-1.1546479965065672,4.6) node[anchor=north west] {$a$};
	\draw (0.825982444221721,4.6) node[anchor=north west] {$b$};
	\draw (-0.02285917323325971,3.0) node[anchor=north west] {$c$};
	\draw (-2.7,6.0) node[anchor=north west] {$A$};
	\draw (2.4,6.0) node[anchor=north west] {$B$};
	\draw (2.0,1.8) node[anchor=north west] {$C$};
	\end{tikzpicture}
	\begin{tikzpicture}[line cap=round,line join=round,>=triangle 45,x=0.6cm,y=0.6cm]
	\clip(-2.8876996321438195,0.6977106489352843) rectangle (3.2664020944047905,6.710338772574716);
	\draw [rotate around={2.480632746197529:(0.18220833836858297,2.0508869860909624)},line width=1.pt] (0.18220833836858297,2.0508869860909624) ellipse (1.1792475838121659cm and 0.7235894076586996cm);
	\draw [rotate around={-55.86852534016869:(1.5970647868681078,4.28267436999626)},line width=1.pt] (1.5970647868681078,4.28267436999626) ellipse (1.3508296865320082cm and 0.6212549196030458cm);
	\draw [rotate around={63.57747534364856:(-1.2349837699919162,4.282674369996265)},line width=1.pt] (-1.2349837699919162,4.282674369996265) ellipse (1.3482265070678305cm and 0.618488709170679cm);
	\draw [line width=0.8pt,dash pattern=on 3pt off 3pt] (-0.7714031045025065,3.0800906302351887)-- (0.10599954967347891,5.143042881429993);
	\draw [line width=0.8pt,dash pattern=on 3pt off 3pt] (0.10599954967347891,5.143042881429993)-- (1.21979079672695,3.1034935523559617);
	\draw [line width=0.8pt,dash pattern=on 3pt off 3pt] (-0.7714031045025065,3.0800906302351887)-- (1.21979079672695,3.1034935523559617);
	\fill[line width=0pt,fill=black,fill opacity=0.1] (-0.7714031045025065,3.0800906302351887) -- (0.10599954967347891,5.143042881429993) -- (1.21979079672695,3.1034935523559617) -- cycle;
	\draw (-1.1546479965065672,4.6) node[anchor=north west] {$a$};
	\draw (0.825982444221721,4.7) node[anchor=north west] {$c$};
	\draw (-0.02285917323325971,3.0) node[anchor=north west] {$b$};
	\draw (-2.7,6.0) node[anchor=north west] {$A$};
	\draw (2.4,6.0) node[anchor=north west] {$C$};
	\draw (2.0,1.8) node[anchor=north west] {$B$};
	\end{tikzpicture}
	\caption{Three convex sets, $A$, $B$, $C$, with negative orientation on the left, and with positive orientation on the right, and their hollow,\protect\hollow\!\!($ABC$).}\label{fig:hollow}
\end{figure}
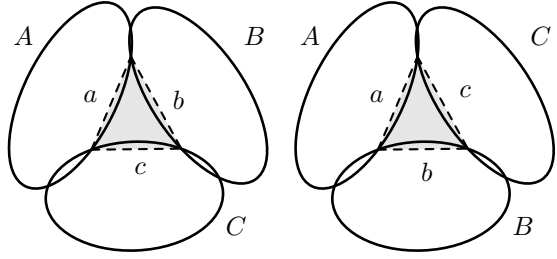

\begin{lem}[Jobson-K\'ezdy-Lehel-Pervenecki-T\'oth \cite{JKLPT}]\label{lem:hollow}
	Three pairwise intersecting closed convex sets, $A,B,C$, that do not have a common point, enclose a hollow $\hollow(ABC)$, and the following four properties hold.
	
	\begin{enumerate}[label=(\alph*)]
		\item $\hollow(ABC)$ is a simply connected region.
		\item The boundary of $\hollow(ABC)$ has exactly one arc from each of the boundaries of $A$, $B$ and $C$.
		\item The closure of the convex hull of $\hollow(ABC)$ is a triangle with sides $a,b,c$ such that (apart from its endpoints) side $a$ is contained in $A\setminus(B\cup C)$, side $b$ in $B\setminus(A\cup C)$, and side $c$ in $C\setminus(A\cup B)$
		\item For any $x\in B\cap C$, $y\in A\cap C$ and $z\in A\cap B$ the orientation of $x,y,z$ is the same: if the sides $a,b,c$ follow each other in a counterclockwise order, it is positive, if the sides $a,b,c$ follow each other in a clockwise order, it is negative.
		\end{enumerate}
\end{lem}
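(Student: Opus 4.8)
The plan is to read properties (a)--(c) off Lemma~1 of \cite{JKLPT}: the existence of $\hollow(ABC)$ and the fact that the closure of its convex hull is a triangle with side $a\subseteq A$, $b\subseteq B$, $c\subseteq C$ are precisely what is recalled before the statement, and the three-arc description in (b) is the boundary version of this triangle structure, each side of the triangle being spanned by one arc of the corresponding boundary. The only one of the first three that I would argue separately is (a): since $\hollow(ABC)$ is the \emph{unique} bounded component of $\R^2\setminus(A\cup B\cup C)$, its complement equals the closed connected set $A\cup B\cup C$ (connected because the three sets pairwise intersect) together with the single unbounded component, and these abut along $\partial(A\cup B\cup C)$; thus $\R^2\setminus\hollow(ABC)$ is connected, and as $\hollow(ABC)$ is a bounded, connected, open planar region this yields simple connectivity. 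The real content, and where I would spend essentially all the effort, is (d).

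For (d) the key observation is that it suffices to rule out collinearity. Write $X=B\cap C$, $Y=A\cap C$, $Z=A\cap B$. Each of these is convex (an intersection of convex sets), hence connected, and they are \emph{pairwise disjoint}, because any two of them meet in $A\cap B\cap C=\emptyset$; in particular, for $x\in X$, $y\in Y$, $z\in Z$ the three points are automatically distinct. I would regard the orientation of $xyz$ as the sign of the (continuous) signed area of the triangle $xyz$, viewed as a function on the connected product $X\times Y\times Z$. If this signed area never vanishes, then its sign is a continuous map from a connected set into $\{+1,-1\}$, hence constant; evaluating at the vertices of the hollow---which are legitimate choices, since by (c) the vertex where sides $b$ and $c$ meet lies in $B\cap C$, the vertex of $a,c$ in $A\cap C$, and the vertex of $a,b$ in $A\cap B$---then identifies the common value with the orientation of $abc$.

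So everything reduces to showing that no $x\in X$, $y\in Y$, $z\in Z$ are collinear, and this is the step I expect to carry the argument. Suppose they were, lying on a common line. Among three distinct collinear points exactly one lies between the other two, so I would split into three cases. If $x$ lies between $y$ and $z$, then $x\in\overline{yz}\subseteq A$ since $y,z\in A$ and $A$ is convex; but $x\in B\cap C$ forces $x\notin A$ (otherwise $A\cap B\cap C\neq\emptyset$), a contradiction. If $y$ lies between $x$ and $z$, then $y\in\overline{xz}\subseteq B$ since $x,z\in B$, contradicting $y\in A\cap C$ with $y\notin B$. If $z$ lies between $x$ and $y$, then $z\in\overline{xy}\subseteq C$ since $x,y\in C$, contradicting $z\in A\cap B$ with $z\notin C$. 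Each case is impossible, so the three points are never collinear, which completes (d). The main obstacle is really spotting this clean reduction: once one sees that the inclusion of each pairwise intersection in the \emph{two} sets defining it forbids the corresponding betweenness, the betweenness trichotomy finishes the non-collinearity, and the continuity-plus-connectedness argument upgrades non-collinearity to a genuinely constant orientation agreeing with that of $abc$.
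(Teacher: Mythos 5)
The paper itself does not prove this lemma: it is imported from Jobson--K\'ezdy--Lehel--Pervenecki--T\'oth with the remark that it is ``a straightforward consequence of Lemma 1 in \cite{JKLPT}'', so there is no internal proof to compare yours against. Your proposal is consistent with that reading---you take (a)--(c) from the citation---and, beyond what the paper does, you actually supply the derivation of part (d), which is the part the paper relies on repeatedly (e.g.\ in Lemma \ref{lem:int} and Corollary \ref{cor:subset}). Your argument for (d) is correct: $B\cap C$, $A\cap C$, $A\cap B$ are convex, hence connected, and pairwise disjoint because $A\cap B\cap C=\emptyset$; collinearity of $x,y,z$ is impossible, since whichever of the three points lay between the other two would, by convexity, belong to all three sets; therefore the signed area is a nonvanishing continuous function on the connected product $(B\cap C)\times(A\cap C)\times(A\cap B)$, so its sign is constant, and evaluating at the three vertices of the hollow identifies this constant with the orientation of $abc$. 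Two details are worth making explicit, though neither is a gap: (i) placing the vertices of the hollow inside the pairwise intersections uses the closedness of $A,B,C$---statement (c) only locates the \emph{open} sides, and the vertices are obtained as limits of points of the respective sets; compactness is a standing assumption of the paper, so this is legitimate; (ii) one should verify the labeling convention, namely that the triangle whose vertices are (vertex of $b,c$), (vertex of $a,c$), (vertex of $a,b$), taken in this cyclic order, has the same orientation as the cyclic order of the sides $a,b,c$---a one-line check. With those remarks your write-up is a complete and correct proof of the only nontrivial part of the statement.
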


Using Lemma \ref{lem:hollow}, we can define the \emph{orientation} of the ordered triple of pairwise intersecting convex sets $A,B,C$, denoted by $\o(ABC)$, as the orientation $\o(xyz)$ of any three points $x\in B\cap C$, $y\in A\cap C$ and $z\in A\cap B$ (see Figure \ref{fig:hollow}). 
We also define the orientation of three convex sets with a common intersection as \emph{zero}.\footnote{It might seem counterintuitive that the intersecting case is assigned 0 but this is the natural choice in some cases; see also \cite[Section 4]{PTa}.}
This way we can assign an orientation to any three members of an intersecting family of convex sets in the plane, which determines their order type. 
We write $\o(ABC)=+1$, $\o(ABC)=-1$, $\o(ABC)=0$, respectively, for positive, negative, zero orientations.
From the definitions, it follows that $\o(ABC)=\o(CAB)=\o(BCA)=-\o(ACB)=-\o(BAC)=-\o(CBA)$. Thus, for any family of pairwise intersecting convex sets in the plane $\mathcal{F}$, this is indeed an orientation, and in case no three elements of $\mathcal{F}$ have a common intersection, it is a total orientation, otherwise it is only partial. 
As we will see later (Lemma \ref{lem:int}), this orientation also satisfies the interiority condition, so it is in fact an order of rank 3.
We will denote a 3-order that is the order type of a collection of pairwise intersecting convex sets by \CPO, and by \CTO if no three sets have a common point.
These are compared to different 3-orders of interest in Figure \ref{fig:diagram}.

Since throughout most of the paper, we will use the specific orientation defined above, $\o(ABC)$ will always denote the orientation of pairwise intersecting convex sets in the plane $A$, $B$ and $C$ in this sense (similarly to the orientation of points).

\begin{figure}[!t]
	\centering
	\input{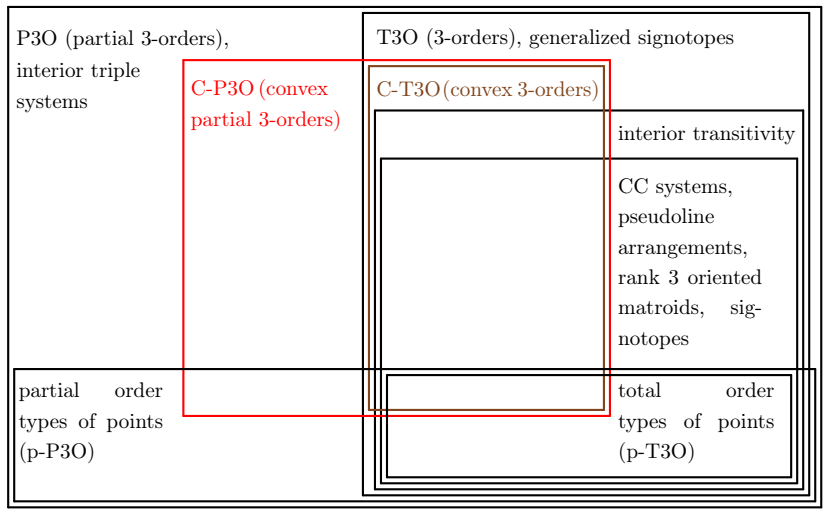}    
	\caption{A diagram illustrating the relationship of some 3-orders. A \PO is any partial orientation of triples satisfying 
	the interiority condition. A \TO is a \PO such that no triple is zero-oriented. A \CTO (resp.\ \CPO) is a \TO (resp.\ \PO) that is realizable with planar convex sets. Theorem \ref{thm:5point} shows that a $\pPO$ might not be a $\CPO$, while that a $\pTO$ might not be a $\CTO$ is proved in our companion paper \cite{GC-3TO}, which also includes several further subclasses of \PO and \TO.\protect\footnotemark~These also imply that \CPO's are a proper subclass of the \PO's and \CTO's are a proper subclass of the \TO's.}\label{fig:diagram}
\end{figure}

\footnotetext{For the Bisztriczky-Fejes T\'oth type definition of order types of convex sets, any point order type is by definition realizable by convex sets, while in the other direction a configuration of convex sets whose order type is not realizable by points was given in \cite{PT3} answering a question of Hubard and Montejano.}

\begin{remark}
	Our definition only allows us to define an orientation for pairwise intersecting triples of convex sets.
	This is unlike the situation in the case of the (quite different) definition of orientation from \cite{BFT1,BFT2,BFT3} by Bisztriczky and Fejes T\'oth (later also investigated in \cite{DHH1,DHH2,HMMS,PT1,PT2,PT3,Suk}) which primarily focused on Erd\H{o}s--Szekeres type theorems.\footnote{For intersecting families, an Erd\H{o}s--Szekeres type theorem with our definition of orientation follows directly from Ramsey's theorem.}
	In these papers the condition on the family of convex sets is that they are pairwise disjoint, or in later papers that they are non-crossing.
	Such a family is in convex position if no set is covered by the convex hull of the rest.
	In this case the orientation of some ordered triple $A,B,C$ is determined by any points $a\in A, b\in B, c\in C$ chosen from the boundary of $conv(A\cup B\cup C)$.
	This definition appeared explicitly in \cite{HMMS} and is implicitly in earlier works---we will refer to it as the Bisztriczky--Fejes T\'oth type orientation.
	Note that if $A,B,C$ are in addition also intersecting but 3-intersection-free, then the Bisztriczky--Fejes T\'oth type definition gives the same orientation as the one used in this paper.
	But such families can contain at most four connected sets, as $K_5$ is non-planar.
\end{remark}

We state the following corollary of
Lemma \ref{lem:hollow}(d).

\begin{cor}\label{cor:subset}
	If the convex sets $A,B,C$ do not have a point in common, and the convex sets $A'\subset A, B'\subset B, C'\subset C$ are pairwise intersecting, then $\o(A'B'C')=\o(ABC)$.
\end{cor}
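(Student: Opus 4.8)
The plan is to exploit the freedom in the choice of witness points granted by Lemma~\ref{lem:hollow}(d): I will produce a single triple of points that simultaneously certifies the orientation of $A'B'C'$ and of $ABC$, so that the two orientations are literally read off from the same triangle.

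First I would check that both orientations are actually defined. Since $A'\subset A$, $B'\subset B$, $C'\subset C$ and $A',B',C'$ are pairwise intersecting, every pair among $A,B,C$ also intersects (for instance $\emptyset\neq A'\cap B'\subset A\cap B$), so $A,B,C$ is a pairwise intersecting family; together with the hypothesis that they have no common point, Lemma~\ref{lem:hollow} applies to $ABC$ and $\o(ABC)\in\{\pm 1\}$. Conversely, if $A',B',C'$ had a common point, then so would $A,B,C$, contradicting the hypothesis; hence $A',B',C'$ is pairwise intersecting without a common point, Lemma~\ref{lem:hollow} applies to $A'B'C'$ as well, and $\o(A'B'C')$ is likewise defined.

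Next I would pick arbitrary witness points $x'\in B'\cap C'$, $y'\in A'\cap C'$, $z'\in A'\cap B'$, which exist by the pairwise intersection of the primed sets. Applying Lemma~\ref{lem:hollow}(d) to $A',B',C'$ gives $\o(x'y'z')=\o(A'B'C')$. But by the containments $B'\cap C'\subset B\cap C$, $A'\cap C'\subset A\cap C$, and $A'\cap B'\subset A\cap B$, these very same points also satisfy $x'\in B\cap C$, $y'\in A\cap C$, $z'\in A\cap B$, so Lemma~\ref{lem:hollow}(d) applied to $A,B,C$ gives $\o(x'y'z')=\o(ABC)$. Combining the two equalities yields $\o(A'B'C')=\o(x'y'z')=\o(ABC)$, as desired.

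I do not expect any serious obstacle; the whole content is the observation that Lemma~\ref{lem:hollow}(d) lets the witness triangle range over \emph{arbitrary} points of the pairwise intersections, so a triangle witnessing the orientation of the smaller sets automatically witnesses that of the larger ones. The only step requiring genuine care is the well-definedness bookkeeping above — in particular confirming that the subset relation forces $A',B',C'$ to have empty common intersection, so that neither orientation collapses to the degenerate value $0$.
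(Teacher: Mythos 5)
Your proof is correct and is exactly the argument the paper intends: the corollary is stated there as an immediate consequence of Lemma~\ref{lem:hollow}(d), obtained by choosing witness points in the pairwise intersections of the primed sets, which automatically lie in the pairwise intersections of $A,B,C$, so the same triangle certifies both orientations. Your additional bookkeeping (that $A,B,C$ pairwise intersect and that $A',B',C'$ cannot have a common point) is a sound and welcome spelling-out of what the paper leaves implicit.
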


If a family of convex sets in the plane is intersecting and 3-intersection-free, we call it \emph{holey}.\footnote{Holey families can be also defined in a different way for abstract families, see the recent result in extremal combinatorics \cite{NP}, but our notion is quite different.} 
Thus, the order type of holey family is always a T3O.
For example, any collection of lines in general position is holey, and the orientation of any triple is determined by their slopes (see Figure \ref{fig:line_orient}).
This orientation for lines is not to be confused with the much studied arrangement types of lines which were shown by Goodman and Pollack \cite{GP82} to correspond to the order types of points by duality.
However, Goodman and Pollack also made the following simple observation about the orientations of triples of lines, which is relevant for us.

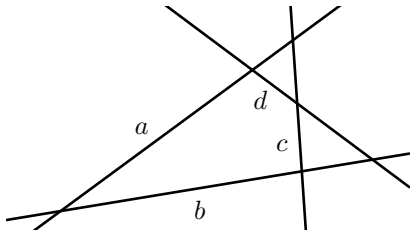
\begin{figure}[!ht]
	\centering
	\begin{tikzpicture}[line cap=round,line join=round,>=triangle 45,x=1cm,y=7mm]
	\clip(-1.0609303135825612,-0.11570559856350218) rectangle (4.29537626950815,4.10958804068733);
	\draw [line width=1.pt,domain=-1.0609303135825612:4.29537626950815] plot(\x,{(--1.6064--2.86*\x)/2.72});
	\draw [line width=1.pt,domain=-1.0609303135825612:4.29537626950815] plot(\x,{(-12.228--4.2*\x)/-0.2});
	\draw [line width=1.pt,domain=-1.0609303135825612:4.29537626950815] plot(\x,{(-14.32--2.9*\x)/-2.72});
	\draw [line width=1.pt,domain=-1.0609303135825612:4.29537626950815] plot(\x,{(--1.0832--0.82*\x)/3.48});
	\draw (0.5182198344202782,2.082300688521527) node[anchor=north west] {$a$};
	\draw (1.3077949084216978,0.6) node[anchor=north west] {$b$};
	\draw (2.4,1.762202685547979) node[anchor=north west] {$c$};
	\draw (2.1,2.7) node[anchor=north west] {$d$};
	\end{tikzpicture}
	\caption{The following triples have positive orientation: $abc$, $abd$, $adc$, $bdc$.}
	\label{fig:line_orient}
\end{figure}

\begin{obs}[Goodman-Pollack \cite{GP84}]
If a holey family consists of lines $l_1,...,l_n$ where the lines are ordered according to their slopes in clockwise circular order, then $\o(l_i,l_j,l_k)=+1$ for all integers $1\le i<j<k\le n$.

\end{obs}

Our main motivation to study holey families is that it can be the first step to improve our understanding of the intersection structure of planar convex sets, which can potentially lead to improved weak $\eps$-nets \cite{ABFK} and $(p,q)$-theorems \cite{AK}.
The question is, what abstract properties of the underlying geometric 3-hypergraphs are useful to derive interesting results.

The rest of this paper is organized as follows.
In Section \ref{sec:interiority} we show that \CPO's satisfy the interiority condition (meaning that they are indeed \PO's as the name suggests, and similarly \CTO's are \TO's), and compare them with other well-studied orientations.
In Section \ref{app:small} we examine which \pTO's (order types) are realizable as a \CTO, and we find that up to five elements, the single condition that the configuration satisfies the interiority condition, is sufficient.
On the other hand, in Section \ref{app:3int} we show that there is a five-element \pPO (five-point order type with collinearities) that cannot be realized as a \CPO.
We prove the strengthening that there is a \pTO that is not a \CTO in a companion paper \cite{GC-3TO}, which primarily studies orientations of \emph{good covers}, a generalization of the orientation studied here.
In Section \ref{app:43} we study what happens if a \CPO also has the (4,3) property, that is, among any four convex sets there are three that have a common point.
We derive some new abstract properties, however, we also show that on their own they are not yet sufficient to prove a $(p,q)$-theorem.
Finally, in Section \ref{sec:discussion} we pose some open problems.

We end this Introduction with a comparison to other works.

\subsubsection*{Other works studying 3-orders}

Knuth \cite[Chapter 3]{Knuth} studied orientations 
that satisfy the interiority condition under the name \emph{interior triple system}, according to Knuth ``for want of a better name.''
We want a better name, so we will refer to such an orientation as a \TO (total 3-order), while if zero-orientations are also allowed, then we call such an orientation a \PO (partial 3-order).
We believe that these names are better as they reflect the similarity to posets, which would be called a \PtwoO (partial 2-order) in our language.
The main result in \cite[Chapter 3]{Knuth} is that there are $2^{\Omega(n^3)}$ different \TO's over $n$ elements.

As we have learned after the first preprint of our paper already appeared, Bergold et al. \cite{BFSchSchS} have also studied \TO's.
Unaware of Knuth's work, they named them \emph{generalized signotopes}, and studied primarily a special subclass of them that can be defined from topological drawings of $K_n$.
Their equivalent definition resembled the definition of signotopes:
They said that an orientation is a 3-order if on any four of its elements the sequence $\o(ABC),\o(ABD),\o(ACD),\o(BCD)$ is not $+1,-1,+1,-1$ or $-1,+1,-1,+1$, i.e., it can change sign at most twice.
It can be easily seen that this is exactly the same as our definition of \TO.
They studied the appropriately defined versions of well-known results and proved, for example, that Kirchberger's theorem holds for \TO but Helly's theorem does not.
They also rediscovered Knuth's construction that there are $2^{\Omega(n^3)}$ different \TO's, and with a recursion obtained a slightly better constant and also an upper bound, proving that the number of \TO is between $2^{0.25\binom n3+o(n^3)}$ and $2^{0.84\binom n3+o(n^3)}$.

To the best of our knowledge, \TO's have not been studied anywhere except the above mentioned two places.
However, if we add another property, called \emph{transitivity} (the definition of which we omit here), then we get a much better studied notion, known as CC systems \cite{Knuth} (see also pseudoline arrangements \cite[Chapter 5]{Handbook} and acyclic rank 3 oriented matroids \cite[Chapter 6]{Handbook}).
The transitivity property, however, is not satisfied by holey convex families.
In fact, not even the following weaker condition, that we define below.

Knuth \cite[Chapter 2, (2.4)]{Knuth} defines the \emph{interior transitivity} condition as follows: If $D\in conv(ABC)$ and $E\in conv(ABD)$, then $E\in conv(ABC)$ (recall that $W\in conv(XYZ)$ means $\o(XYW)=\o(YZW)=\o(ZXW)\neq0$).
The interior transitivity condition is satisfied by the earlier mentioned CC systems, but it is strictly weaker than them.
Indeed, Knuth proved that the number of 3-orders on $n$ sets is $2^{\Omega(n^2\log n)}$, while the number of CC systems is $2^{\Theta(n^2)}$, and Goodman and Pollack proved that the number of CC systems that are representable by planar point sets, known as stretchable arrangements/order types, is $2^{\Theta(n\log n)}$. \cite{GP93}
There are holey families that do not satisfy the interior transitivity condition, see Figure \ref{fig:interiortrans}.

\begin{figure}[!h]
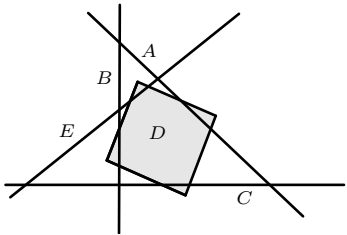

    \centering
    \include{Figures/fura}
    \caption{A family of convex sets not satisfying the interior transitivity: $D\in conv(ABC)$ and $E\in conv(ABD)$ but $E\notin conv(ABC)$.}
    \label{fig:interiortrans}
\end{figure}

However, the following weaker statement is true.

\begin{claim}\label{claim:subknuth}
	If $A, B, C, D$ and $E$ are convex sets forming a holey family such that $D\in conv(ABC)$ and $E\in conv(ABD)$, then $D\cap E\subset \hollow(ABC)$. 
\end{claim}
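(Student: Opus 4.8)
The plan is to pin down $D\cap E$ first by showing it lies in the complement of $A\cup B\cup C$, and then to place it inside the hollow using a nesting of the two hollows $\hollow(ABD)$ and $\hollow(ABC)$. By reflecting the plane we may assume $\o(ABD)=\o(BCD)=\o(CAD)=1$, so that $\o(ABC)=1$ by Lemma~\ref{lem:int}.

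First I would collect the cheap facts. As $D$ and $E$ intersect, $D\cap E$ is a nonempty convex, hence connected, set. For each $X\in\{A,B,C\}$ the triple $X,D,E$ lies in the holey family and is therefore 3-intersection-free, so $X\cap D\cap E=\emptyset$; thus $D\cap E$ is disjoint from $A\cup B\cup C$. Since $A\cup B\cup C$ is compact and connected, its complement has exactly one unbounded component together with the single bounded component $\hollow(ABC)$ (Lemma~\ref{lem:hollow}). Being connected, $D\cap E$ lies entirely in one of these two, so it suffices to find a single point of $D\cap E$ inside $\hollow(ABC)$.

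The heart of the argument is the nesting $\hollow(ABD)\subset\hollow(ABC)$. Because $D\in conv(ABC)$, the set $D$ crosses $\hollow(ABC)$, and the part of $D$ inside it, together with $A$ and $B$, pinches off the smaller hollow $\hollow(ABD)$. To make this precise I would check that $\hollow(ABD)$ avoids all of $A\cup B\cup C$: it avoids $A$ and $B$ by definition, and it avoids $C$ because $C\notin conv(ABD)$ (indeed $\o(ABC)=1$ while $\o(BDC)=-\o(BCD)=-1$), so $C$ cannot cross $\hollow(ABD)$. As $\o(ABD)=\o(ABC)$, the two hollows lie on the same side of the lens $A\cap B$ and share the corner there, giving a common point; since $\hollow(ABD)$ is connected and disjoint from $A\cup B\cup C$, it must then lie entirely in $\hollow(ABC)$.

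Finally I would produce the point. From $E\in conv(ABD)$ the set $E$ crosses $\hollow(ABD)$ and in particular meets its $D$-arc, yielding a point $r\in E\cap\partial D\cap\hollow(ABD)$; in the regular case this is exactly the connected curve $E\cap\hollow(ABD)\cap\partial D$ discussed before the claim. Then $r\in D\cap E$ and, by the nesting, $r\in\hollow(ABC)$, so by connectivity $D\cap E\subset\hollow(ABC)$. The main obstacle is the nesting step together with the guarantee that $E$ actually reaches the $D$-boundary of $\hollow(ABD)$: both are evident from the crossing picture but must be argued through the regular/irregular dichotomy, where the preceding impossibility of doubly irregular containments is the tool that keeps the configurations under control.
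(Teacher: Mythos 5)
Your overall skeleton is sound and matches the paper's endgame: since $D\cap E$ is convex (hence connected) and, by 3-intersection-freeness, disjoint from $A\cup B\cup C$, it suffices to exhibit a single point of $D\cap E$ inside $\hollow(ABC)$, and such a point is to be found on the $D$-arc of $\partial\hollow(ABD)$. However, the step you yourself call the heart of the argument is wrong. The nesting $\hollow(ABD)\subset\hollow(ABC)$ does not hold in general, and your justification --- ``$C\notin conv(ABD)$, so $C$ cannot cross $\hollow(ABD)$'' --- is an invalid inference: no result available (and no true statement) lets you pass from an orientation fact to the geometric fact that a set avoids a hollow. The paper's Figure \ref{fig:containment} (right), the irregular containment, is a concrete counterexample: there $D\in conv(ABC)$ but $D$ cuts through $C$, the apex of $C$ pokes into the region enclosed by $A$, $B$ and $D$, so $\hollow(ABD)$ contains points of $C$ and is \emph{not} contained in $\hollow(ABC)$ --- even though, exactly as you compute, $\o(BDC)=-\o(BCD)$ shows $C\notin conv(ABD)$. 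This is precisely the phenomenon the regular/irregular dichotomy was introduced to flag; your argument only survives in the regular case.

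The repair is to use the weaker containment that Observation \ref{obs:deabc} actually provides when applied to $D\in conv(ABC)$, namely $\hollow(ABD)\subset\hollow(ABC)\cup A\cup B\cup C$, so that $\partial\hollow(ABD)$ lies in the closure of $\hollow(ABC)$ together with $A\cup B\cup C$. This suffices for your endgame: your point $r$ lies in $D\cap E$, hence avoids $A\cup B\cup C$ by 3-intersection-freeness, and since $\partial\hollow(ABC)\subset A\cup B\cup C$, the point $r$ is forced into the open hollow $\hollow(ABC)$; connectivity of $D\cap E$ finishes. This is exactly the paper's (very short) proof, so the fix costs nothing. Two smaller points: your $r$ should be taken on $\partial\hollow(ABD)\cap\partial D$, not in $\hollow(ABD)\cap\partial D$, which is empty because the hollow is disjoint from $D$; and the assertion that $E$ (more precisely $D\cap E$) must actually meet the $D$-arc of $\partial\hollow(ABD)$ is left unproved by you in the irregular case --- though the paper asserts this without proof as well, so you are in good company there.
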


For the proof, we need the following simple observation, which follows from checking how a regular or irregular containment can look like; see Figure \ref{fig:containment}.

\begin{obs}\label{obs:deabc}
    Suppose $A, B, C$ and $O$ are elements of a holey family.\\
	Then $O\in conv(ABC)$ if and only if $\hollow(ABO)$, $\hollow(BCO)$, $\hollow(CAO)\subset \hollow(ABC)\cup A\cup B\cup C$.
\end{obs}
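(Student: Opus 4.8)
The plan is to reduce the set-theoretic containment to a boundary condition via the simple-connectivity of the target region, and then to decide that condition by inspecting the regular and irregular containment configurations of Figure~\ref{fig:containment}, exactly as the authors' hint suggests.

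First I would record the topology of the target region. Write $R:=\hollow(ABC)\cup A\cup B\cup C$. Since $A,B,C$ are pairwise intersecting with no common point, Lemma~\ref{lem:hollow} says that $\R^2\setminus(A\cup B\cup C)$ has exactly one bounded component, namely $\hollow(ABC)$; hence $\R^2\setminus R$ is a single connected unbounded region $U$, and $R$ is simply connected. Now fix one of the three small hollows, say $\hollow(ABO)$. By Lemma~\ref{lem:hollow}(a)--(b) its boundary is a Jordan curve made of exactly one arc on each of $\partial A,\partial B,\partial O$; call the last one $\alpha_O$. The $A$- and $B$-arcs lie in $A\cup B\subseteq R$, so $\partial\hollow(ABO)\subseteq R$ iff $\alpha_O\subseteq R$, and in that case the bounded face of the Jordan curve---which is $\hollow(ABO)$ itself---must avoid the connected unbounded set $U$, giving $\hollow(ABO)\subseteq R$. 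Conversely $\hollow(ABO)\subseteq R$ forces $\alpha_O\subseteq \overline{\hollow(ABO)}\subseteq R$. Thus for each pair
$$\hollow(XYO)\subseteq R \iff \alpha_O\ (\text{the }O\text{-arc of }\partial\hollow(XYO))\subseteq R \iff \hollow(XYO)\cap U=\emptyset,$$
and the whole statement reduces to deciding when the three $O$-arcs stay out of $U$.

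For the forward direction assume $O\in conv(ABC)$; by the interiority lemma (Lemma~\ref{lem:int}) and its mirror image the three orientations $\o(ABO),\o(BCO),\o(CAO)$ are then all equal to $\o(ABC)$, say all $+1$. Here I would invoke the regular/irregular dichotomy: by definition $O\in conv(ABC)$ occurs either regularly or irregularly, and by the Claim that doubly irregular containments are impossible, $O$ meets at most one of $A,B,C$ in a disconnected piece. In the regular case $O$ crosses the central hollow meeting each of $A,B,C$ in a single arc, and the $O$-arc of $\hollow(ABO)$ is a sub-arc flanked by $A$ and $B$; the remaining two conditions $\o(BCO)=\o(CAO)=\o(ABC)$ are exactly what prevents $O$ from reaching past the side $c\subseteq C$, so $\alpha_O$ stays inside $\hollow(ABC)\cup A\cup B\subseteq R$, and symmetric statements handle the other two arcs. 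In the irregular case one repeats the inspection of Figure~\ref{fig:containment} for the unique set with a disconnected trace, checking that its extra component still does not push any $O$-arc out of $R$.

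The reverse direction I would prove by contraposition: if $O\notin conv(ABC)$ then the three orientations are not all equal, and I claim some $O$-arc enters $U$. Up to the cyclic symmetry of $A,B,C$ and the reflection $+1\leftrightarrow -1$ there are only two sign patterns to treat (one minority sign, or two), and for each I would locate $O$ relative to the triangle $\overline{\hollow(ABC)}$: a mismatched orientation forces $O$ to protrude past one of the sides $a,b,c$, and tracing the corresponding hollow shows its $O$-arc crosses that side into the adjacent gap, which is part of $U$. The main obstacle is precisely this reverse bookkeeping: as the configuration in which $O$ reaches \emph{beyond} one set shows, the small hollow that escapes need not be the one whose orientation flipped (pushing $O$ past side $c$ can keep $\o(ABO)=\o(ABC)$ while sending $\hollow(ABO)$ into $U$), so one cannot argue pair-by-pair and must instead track the global position of $O$, once more using the Claim that doubly irregular containments are impossible to keep the case analysis finite. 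Everything else is routine inspection of the handful of combinatorial types in Figure~\ref{fig:containment}.
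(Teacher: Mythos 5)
Your opening reduction is correct and is in fact cleaner than anything the paper writes down: $R=\hollow(ABC)\cup A\cup B\cup C$ is closed, its complement $U$ is a single connected unbounded region, and by Lemma~\ref{lem:hollow}(a)--(b) each of $\hollow(ABO)$, $\hollow(BCO)$, $\hollow(CAO)$ lies in $R$ exactly when its $O$-arc does, i.e.\ exactly when that hollow misses $U$. Your forward direction (containment implies the three hollows stay in $R$) is then argued at the same level of rigor as the paper's own proof, which consists entirely of the sentence that the observation ``follows from checking how a regular or irregular containment can look like'' together with Figure~\ref{fig:containment}; the only thing you leave tacit there is that $O\in conv(ABC)$ forces $O$ to meet $\partial\hollow(ABC)$ at all, which is needed before the regular/irregular dichotomy can be invoked.

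The genuine gap is in the reverse implication, which you prove by contraposition from $O\notin conv(ABC)$. Both tools you invoke there are unavailable in that regime: regular and irregular containment are \emph{defined} only for a set satisfying $O\in conv(ABC)$ (they classify $O\cap\partial\hollow(ABC)\cap X$ for a set already known to be ``inside''), and the claim that doubly irregular containments are impossible likewise carries $O\in conv(ABC)$ as a hypothesis. So neither can ``keep the case analysis finite'' precisely where you need it, and that step fails as written. Worse, the pivotal geometric assertion --- that a mismatched orientation forces $O$ to protrude past one of the sides $a,b,c$ and that the corresponding small hollow then crosses into $U$ --- is never argued; your own caveat (the escaping hollow need not be the one whose orientation is mismatched) is an admission that no pairwise or local mechanism is on the table, and no global one is supplied in its place. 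To close this half you would need an actual positional analysis of a set $O$ \emph{not} in $conv(ABC)$, for instance in the style of the proof of Lemma~\ref{lem:int}: choose witness points in the pairwise intersections, split the plane into the seven regions determined by $conv(abc)$, and show that in every admissible sign pattern one of $\hollow(ABO)$, $\hollow(BCO)$, $\hollow(CAO)$ picks up points of $U$. As it stands, that direction of your proposal is a description of the difficulty rather than a proof of the statement.
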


\begin{proof}[Proof of Claim \ref{claim:subknuth}]
	Since $D\cap E$ intersects $\partial \hollow(ABD)$ which is contained in $\hollow(ABC)\cup A\cup B\cup C$ by Observation \ref{obs:deabc}, and $D\cap E$ cannot intersect $A\cup B\cup C$ as there are no triple intersections, we get that $D\cap E\subset \hollow(ABC)$, as required.
\end{proof}

\section{Interiority}\label{sec:interiority}

We already defined the interiority condition for orientations defined on families of $k$-tuples. Now we will prove that \CPO's do satisfy the interiority condition and thus they are \PO's. This obviously also proves that \CTO's are \TO's.

Recall that we say that a (partial) orientation satisfies the \emph{interiority condition} if $\o(ABD)={\o(BCD)}=\o(CAD)=1$ imply $\o(ABC)=1$ for any $A, B, C, D$. 
But some others, like separability, are preserved; see \cite{BFSchSchS} for a proof of Kirchberger's Theorem.

\begin{lem}[Interiority Lemma]\label{lem:int}
For any four pairwise intersecting convex sets $A, B, C, D$ if $\o(ABD)=\o(BCD)=\o(CAD)=1$, then $\o(ABC)=1$.
\end{lem}

Within this section, we will also use the following notation:

\begin{proof}
Suppose $A,B,C,D$ is an intersecting family of convex sets and $\o(ABD)=\o(BCD)=\o(CAD)=1$. Then we need to show that $\o(ABC)=1$.

	For a contradiction, suppose first $\o(ABC)=0$. Fix some $w\in A\cap B\cap C$, and take any $a\in A\cap D$, $b\in B\cap D$ and $c\in C\cap D$ and check the orientations of the triples of $w,a,b,c$ using Lemma \ref{lem:hollow}(d). We get $\o(abw)=\o(bcw)=\o(caw)=1$.	It follows that $w\in conv(a,b,c)\subset D$, contradicting that $\o(ABD)=1$.
	
	Now suppose $\o(ABC)=-1$. Take any $a\in A\cap D$, $b\in B\cap D$, $c\in C\cap D$, $z\in A\cap B$, $x\in B\cap C$ and $y\in A\cap C$.
	We can assume that these six points are in general position, otherwise we could slightly perturb them, along with the convex sets containing them, if necessary, without introducing a triple intersection.
	The conditions and Lemma \ref{lem:hollow}(d) imply that $\o(abz)=\o(bcx)=\o(cay)=-1$ and $\o(xyz)=-1$.  
	Also, as there is no triple intersection, we know that $x,y,z\notin conv(abc)$, $b,c,x\notin conv(ayz)$, 
	$a,c,y\notin conv(bxz)$, 
	$a,b,z\notin conv(cxy)$.
	We will deal with two cases, depending on the orientation of $abc$.
	The lines $ab,bc,ca$ divide the plane into seven regions:  a bounded triangle $conv(abc)$, three unbounded cones, which we denote by $V_a, V_b, V_c$, respectively, indexed by their apexes, and three unbounded regions sharing a side each with the triangle $conv(abc)$, which we denote by $U_{ab}, U_{bc}, U_{ac}$, respectively, indexed by the adjacent side of the triangle.
	
	\begin{figure}[!ht]
		\centering
		\definecolor{ffqqqq}{rgb}{1.,0.,0.}
		\scalebox{0.8}{\begin{tikzpicture}[line cap=round,line join=round,>=triangle 45,x=1.0cm,y=1.0cm]
			\clip(-2.063588759444999,-1.6963559121803733) rectangle (7.388920672533251,7.059194299006854);
			\fill[line width=2.pt,fill=black,fill opacity=0.10000000149011612] (-1.,1.) -- (1.34,3.06) -- (3.,5.) -- (3.,6.) -- (-1.,6.) -- cycle;
			\fill[line width=2.pt,fill=black,fill opacity=0.10000000149011612] (2.,2.) -- (-1.,2.) -- (-1.,-1.) -- (6.,-1.) -- (6.,1.) -- cycle;
			\fill[line width=2.pt,fill=black,fill opacity=0.10000000149011612] (6.,0.) -- (4.,2.) -- (3.04,3.1) -- (2.,6.) -- (6.,6.) -- cycle;
			\fill[line width=2.pt,fill=black,fill opacity=0.10000000149011612] (2.2,3.9) -- (0.5,4.) -- (2.,1.) -- (4.,3.8) -- cycle;
			\draw [line width=1.pt,color=ffqqqq,domain=-2.063588759444999:7.388920672533251] plot(\x,{(--8.75-0.*\x)/2.5});
			\draw [line width=1.pt,color=ffqqqq,domain=-2.063588759444999:7.388920672533251] plot(\x,{(-1.75--2.*\x)/1.5});
			\draw [line width=1.pt,color=ffqqqq,domain=-2.063588759444999:7.388920672533251] plot(\x,{(-5.5--2.*\x)/-1.});
			\draw [line width=1.1pt] (-1.,1.)-- (1.34,3.06);
			\draw [line width=1.1pt] (1.34,3.06)-- (3.,5.);
			\draw [line width=1.1pt] (3.,5.)-- (3.,6.);
			\draw [line width=1.1pt] (3.,6.)-- (-1.,6.);
			\draw [line width=1.1pt] (-1.,6.)-- (-1.,1.);
			\draw [line width=1.1pt] (2.,2.)-- (-1.,2.);
			\draw [line width=1.1pt] (-1.,2.)-- (-1.,-1.);
			\draw [line width=1.1pt] (-1.,-1.)-- (6.,-1.);
			\draw [line width=1.1pt] (6.,-1.)-- (6.,1.);
			\draw [line width=1.1pt] (6.,1.)-- (2.,2.);
			\draw [line width=1.1pt] (6.,0.)-- (4.,2.);
			\draw [line width=1.1pt] (4.,2.)-- (3.04,3.1);
			\draw [line width=1.1pt] (3.04,3.1)-- (2.,6.);
			\draw [line width=1.1pt] (2.,6.)-- (6.,6.);
			\draw [line width=1.1pt] (6.,6.)-- (6.,0.);
			\draw [line width=1.1pt] (2.2,3.9)-- (0.5,4.);
			\draw [line width=1.1pt] (0.5,4.)-- (2.,1.);
			\draw [line width=1.1pt] (2.,1.)-- (4.,3.8);
			\draw [line width=1.1pt] (4.,3.8)-- (2.2,3.9);
			\draw [color=ffqqqq] (6.278141914397557,6) node[anchor=north west] {$V_a$};
			\draw [color=ffqqqq] (-1.8675689785975236,6) node[anchor=north west] {$V_b$};
			\draw [color=ffqqqq] (1.464767295809555,-0.9558367400899113) node[anchor=north west] {$V_c$};
			\draw [color=ffqqqq] (1.9657067357531028,7.0) node[anchor=north west] {$U_{ab}$};
			\draw [color=ffqqqq] (6.343481841346716,1.0) node[anchor=north west] {$U_{ac}$};
			\draw [color=ffqqqq] (-1.8893489542472433,1.0) node[anchor=north west] {$U_{bc}$};
			\draw (0.95,4.0) node[anchor=north west] {$b$};
			\draw (3.00,3.85) node[anchor=north west] {$a$};
			\draw (2.05,1.8) node[anchor=north west] {$c$};
			\draw (2.4230862243972116,5.926635565221441) node[anchor=north west] {$z$};
			\draw (-0.8874700743601476,1.9844599726222172) node[anchor=north west] {$x$};
			\draw (5.5,1.0) node[anchor=north west] {$y$};
			
			\draw [fill=black] (3.5,3.5) circle (1.5pt);
			\draw [fill=black] (1.0,3.5) circle (1.5pt);
			\draw [fill=black] (2.0,1.53) circle (1.5pt);
			
			\draw [fill=black] (-0.455,1.8) circle (1.5pt);
			\draw [fill=black] (5.5,0.9390211414356826) circle (1.pt);
			\draw [fill=black] (2.65,5.45) circle (1.5pt);
			\end{tikzpicture}}
		\caption{Case 1 of the proof of Lemma \ref{lem:int}. Beware that in the figure $\o(xyz)=1$ while in the proof $\o(xyz)=-1$ but we could find no better way to depict contradicting assumptions.}
		\label{fig:sevenpart}
	\end{figure}
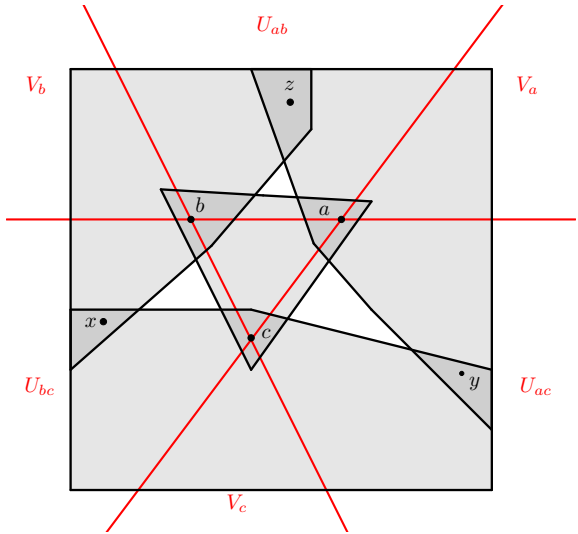
	
	Case 1: $\o(abc)=1$ (see Figure \ref{fig:sevenpart}).\\
	The orientation conditions and $x,y,z\notin conv(abc)$ imply that
	$x\in V_b\cup U_{bc}\cup V_c$,
	$y\in V_c\cup U_{ac}\cup V_a$,
	$z\in V_a\cup U_{ab}\cup V_b$.\\
	Since $\o(xyz)=-1$, two of $x,y,z$ must fall in the same cone $V_i$.
	Without loss of generality, assume that $x,y\in V_c$.
	As $c\notin conv(ayz)$, and $a$ is to the right of the directed line $yc$, $z$ must either lie to the right of line $yc$ or to the left of the line $ac$. Since $z$ lies to right of the line $ab$, if it lies to the left of $ac$ then it is in $V_a$. Hence $z$ must lie to the right of $yc$. Similarly $z$ must lie to the left of $xc$. But this implies $z\in U_{ab}$ and $\o(xyz)=1$, a contradiction.
	
	Case 2: $\o(abc)=-1$.\\
	The orientation conditions and $x,y,z\notin conv(abc)$ imply that
	$x\in U_{ab}\cup V_a\cup U_{ac}$,
	$y\in U_{bc}\cup V_b\cup U_{ab}$,
	$z\in U_{ac}\cup V_c\cup U_{bc}$.\\
	If any of $x,y,z$ fall in a cone $V_i$, e.g., $x$ falls in $V_a$ then $y,z\in U_{a,c}$ and we can finish with a similar argument as in the previous case.\\
	Otherwise, say that $a$ has an \emph{opposite} point, if $y\in U_{bc}$ or $z\in U_{bc}$ and, similarly, $b$ has an opposite point, if $x\in U_{ac}$ or $y\in U_{ac}$ and $c$ has an opposite point, if $x\in U_{ab}$ or $y\in U_{ab}$.
	If $a$ does not have an opposite point, then $y\in U_{ab}$ and $z\in U_{ac}$, which implies that both $b$ and $c$ have an opposite point.
	Therefore, at least two of $a,b,c$ have an opposite point, say, $b$ and $c$.
	But then the segments connecting $b$ and $c$ to their opposite points intersect inside $conv(abc)$, which gives a triple intersection, contradicting our assumptions.
\end{proof}

\subsubsection*{Regular and irregular containment}
In Figure \ref{fig:containment} we can see two different ways $D\in conv(ABC)$ can happen. We will see that the one on the right complicates many scenarios, so we will often handle the two cases separately.  We say that the containment $D\in conv(ABC)$ is \emph{regular} if each of $D\cap \partial\hollow(ABC)\cap A$, $D\cap \partial\hollow(ABC)\cap B$ and $D\cap \partial\hollow(ABC)\cap C$ is a connected set, and we say that the containment $D\in conv(ABC)$ is \emph{irregular} if one of them has more than one connected component. Also, whichever of $A$, $B$ and $C$ has a disconnected intersection with $D\cap\partial\hollow(ABC)$, we call the containment irregular with respect to that set.
If $D\in conv(ABC)$ is regular, then each of $D\cap \hollow (ABC) \cap  \partial A$, $D\cap \hollow (ABC) \cap  \partial B$ and $D\cap \hollow (ABC) \cap  \partial C$ is a connected curve.

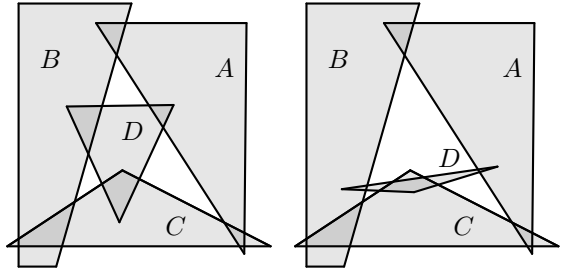
\begin{figure}[t]
	\centering
	\begin{tikzpicture}[line cap=round,line join=round,>=triangle 45,x=0.5cm,y=0.5cm]
	\clip(-2.34,-1.72) rectangle (5.08,5.9);
	\fill[line width=0.8pt,fill=black,fill opacity=0.10000000149011612] (-2.,-1.) -- (5.,-1.) -- (1.06,1.02) -- cycle;
	\fill[line width=0.8pt,fill=black,fill opacity=0.10000000149011612] (-1.7,-1.54) -- (-1.7,5.46) -- (1.3,5.46) -- (-0.7,-1.54) -- cycle;
	\fill[line width=0.8pt,fill=black,fill opacity=0.10000000149011612] (4.3,-1.2) -- (0.36,4.94) -- (4.36,4.94) -- cycle;
	\fill[line width=0.8pt,fill=black,fill opacity=0.10000000149011612] (-0.42,2.72) -- (0.98,-0.36) -- (2.42,2.76) -- cycle;
	\draw [line width=0.8pt] (1.06,1.02)-- (-2.,-1.);
	\draw [line width=0.8pt] (1.06,1.02)-- (5.,-1.);
	\draw [line width=0.8pt] (-2.,-1.)-- (5.,-1.);
	\draw [line width=0.8pt] (5.,-1.)-- (1.06,1.02);
	\draw [line width=0.8pt] (1.06,1.02)-- (-2.,-1.);
	\draw [line width=0.8pt] (-1.7,-1.54)-- (-1.7,5.46);
	\draw [line width=0.8pt] (-1.7,5.46)-- (1.3,5.46);
	\draw [line width=0.8pt] (1.3,5.46)-- (-0.7,-1.54);
	\draw [line width=0.8pt] (-0.7,-1.54)-- (-1.7,-1.54);
	\draw [line width=0.8pt] (4.3,-1.2)-- (0.36,4.94);
	\draw [line width=0.8pt] (0.36,4.94)-- (4.36,4.94);
	\draw [line width=0.8pt] (4.36,4.94)-- (4.3,-1.2);
	\draw [line width=0.8pt] (-0.42,2.72)-- (0.98,-0.36);
	\draw [line width=0.8pt] (0.98,-0.36)-- (2.42,2.76);
	\draw [line width=0.8pt] (2.42,2.76)-- (-0.42,2.72);
	\draw (-1.38,4.46) node[anchor=north west] {$B$};
	\draw (3.26,4.24) node[anchor=north west] {$A$};
	\draw (1.96,0.02) node[anchor=north west] {$C$};
	\draw (0.78,2.56) node[anchor=north west] {$D$};
	\end{tikzpicture}
	\begin{tikzpicture}[line cap=round,line join=round,>=triangle 45,x=0.5cm,y=0.5cm]
	\clip(-2.34,-1.72) rectangle (5.08,5.9);
	\fill[line width=0.8pt,fill=black,fill opacity=0.10000000149011612] (-2.,-1.) -- (5.,-1.) -- (1.06,1.02) -- cycle;
	\fill[line width=0.8pt,fill=black,fill opacity=0.10000000149011612] (-1.7,-1.54) -- (-1.7,5.46) -- (1.3,5.46) -- (-0.7,-1.54) -- cycle;
	\fill[line width=0.8pt,fill=black,fill opacity=0.10000000149011612] (4.3,-1.2) -- (0.36,4.94) -- (4.36,4.94) -- cycle;
	\fill[line width=0.8pt,fill=black,fill opacity=0.10000000149011612] (-0.76,0.52) -- (1.16,0.44) -- (3.38,1.12) -- cycle;
	\draw [line width=0.8pt] (1.06,1.02)-- (-2.,-1.);
	\draw [line width=0.8pt] (1.06,1.02)-- (5.,-1.);
	\draw [line width=0.8pt] (-2.,-1.)-- (5.,-1.);
	\draw [line width=0.8pt] (5.,-1.)-- (1.06,1.02);
	\draw [line width=0.8pt] (1.06,1.02)-- (-2.,-1.);
	\draw [line width=0.8pt] (-1.7,-1.54)-- (-1.7,5.46);
	\draw [line width=0.8pt] (-1.7,5.46)-- (1.3,5.46);
	\draw [line width=0.8pt] (1.3,5.46)-- (-0.7,-1.54);
	\draw [line width=0.8pt] (-0.7,-1.54)-- (-1.7,-1.54);
	\draw [line width=0.8pt] (4.3,-1.2)-- (0.36,4.94);
	\draw [line width=0.8pt] (0.36,4.94)-- (4.36,4.94);
	\draw [line width=0.8pt] (4.36,4.94)-- (4.3,-1.2);
	\draw [line width=0.8pt] (-0.76,0.52)-- (1.16,0.44);
	\draw [line width=0.8pt] (1.16,0.44)-- (3.38,1.12);
	\draw [line width=0.8pt] (3.38,1.12)-- (-0.76,0.52);
	\draw (-1.38,4.46) node[anchor=north west] {$B$};
	\draw (3.26,4.24) node[anchor=north west] {$A$};
	\draw (1.96,0.02) node[anchor=north west] {$C$};
	\draw (1.55,1.85) node[anchor=north west] {$D$};
	\end{tikzpicture}\caption{Regular and irregular containment $D\in conv(ABC)$.}
	\label{fig:containment}
\end{figure}

\subsubsection*{Doubly irregular containments are impossible}

\begin{claim}
For convex sets $A$, $B$, $C$ and $D$, it is impossible that $D\in conv(ABC)$ and the containment is irregular with respect to both $A$ and $B$.
\end{claim}

\begin{proof}
Suppose $D$ is irregular with respect to $A$ and $B$. By definition, we know that $D\setminus int(A)$ has at least two connected components, whose border with $A$ is part of $\partial{\hollow(ABC)}$; let us call these components $D_{A1}$ and $D_{A2}$. Similarly, define $D_{B1}$ and $D_{B2}$. Since $D\cap int(A)$ and $D\cap int(B)$ are disjoint (and both are connected), $D\cap int(B)$ is separated by $D\cap int(A)$ within $D$ from at least one of $D_{A1}$ and $D_{A2}$. Similarly, $D\cap int(A)$ is separated by $D\cap int(B)$ within $D$ from one of $D_{B1}$ and $D_{B2}$. We can assume without loss of generality that these components are $D_{A1}$ and $D_{B1}$, respectively (see Figure \ref{fig:dirreg}). 
Choose two points, $p$ from $D_{A1}\cap  \partial{\hollow(ABC)}$ and $q$ from $D_{B1}\cap \partial{\hollow(ABC)}$. The segment between $p$ and $q$ crosses both $int(A)$ and $int(B)$, and has both of its endpoints within $\partial{\hollow(ABC)}$. Thus, the $pq$ line can leave ${\hollow(ABC)}$ (which is a bounded set) only via $C$ in both directions. But since the $pq$ segment is not fully contained in $C$, and $C$ is convex, we get a contradiction.
\end{proof}
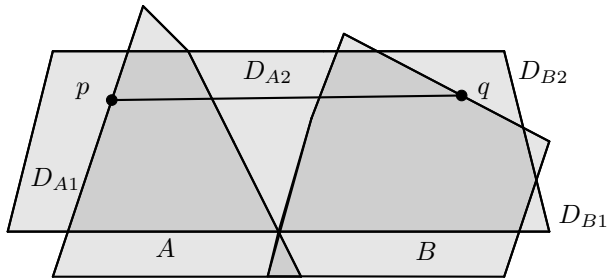
\begin{figure}[h]
    \centering
    \definecolor{zzttqq}{rgb}{0.6,0.2,0.}
\begin{tikzpicture}[line cap=round,line join=round,>=triangle 45,x=0.6cm,y=0.6cm]
\clip(-0.3239038565299793,-3.6507200023489927) rectangle (13.409495775404775,3.6161371455511833);
\fill[line width=2.pt,fill=black,fill opacity=0.10000000149011612] (1.,-3.) -- (3.,3.) -- (4.,2.) -- (5.,0.) -- (6.,-2.) -- (6.5,-3.) -- cycle;
\fill[line width=1.pt,fill=black,fill opacity=0.10000000149011612] (5.761176214858738,-2.990541253882303) -- (6.730055510618764,0.510670070861791) -- (7.449354863662415,2.3859862412970236) -- (12.,0.) -- (11.,-3.) -- cycle;
\fill[line width=1.pt,fill=black,fill opacity=0.10000000149011612] (0.,-2.) -- (1.,2.) -- (11.,2.) -- (12.,-2.) -- cycle;
\draw [line width=0.8pt] (1.,-3.)-- (3.,3.);
\draw [line width=0.8pt] (3.,3.)-- (4.,2.);
\draw [line width=0.8pt] (4.,2.)-- (5.,0.);
\draw [line width=0.8pt] (6.730055510618764,0.510670070861791)-- (7.449354863662415,2.3859862412970236);
\draw [line width=0.8pt] (7.449354863662415,2.3859862412970236)-- (12.,0.);
\draw [line width=0.8pt] (6.730055510618764,0.510670070861791)-- (6.,-2.);
\draw [line width=0.8pt] (5.,0.)-- (6.,-2.);
\draw [line width=0.8pt] (12.,0.)-- (11.,-3.);
\draw [line width=0.8pt] (12.,-2.)-- (11.,2.);
\draw [line width=0.8pt] (11.,2.)-- (7.,2.);
\draw [line width=0.8pt] (7.,2.)-- (1.,2.);
\draw [line width=0.8pt] (1.,2.)-- (0.,-2.);
\draw [line width=0.8pt] (0.,-2.)-- (12.,-2.);
\draw [line width=0.8pt] (6.,-2.)-- (5.761176214858738,-2.990541253882303);
\draw [line width=0.8pt] (6.,-2.)-- (6.5,-3.);
\draw (3.0694303182604257,-1.9540529149537973) node[anchor=north west] {$A$};
\draw (8.83169589809319,-2.0180780880630502) node[anchor=north west] {$B$};
\draw (0.3,-0.38543617377710737) node[anchor=north west] {$D_{A1}$};
\draw (12,-1.2) node[anchor=north west] {$D_{B1}$};
\draw (5,2) node[anchor=north west] {$D_{A2}$};
\draw (11.1,2) node[anchor=north west] {$D_{B2}$};
\draw (1.3087380577559704,1.5) node[anchor=north west] {$p$};
\draw (10.2,1.5) node[anchor=north west] {$q$};
\draw [line width=0.8pt] (1.,-3.)-- (3.,3.);
\draw [line width=0.8pt] (3.,3.)-- (4.,2.);
\draw [line width=0.8pt] (4.,2.)-- (5.,0.);
\draw [line width=0.8pt] (5.,0.)-- (6.,-2.);
\draw [line width=0.8pt] (6.,-2.)-- (6.5,-3.);
\draw [line width=0.8pt] (6.5,-3.)-- (1.,-3.);
\draw [line width=0.8pt] (5.761176214858738,-2.990541253882303)-- (6.730055510618764,0.510670070861791);
\draw [line width=0.8pt] (6.730055510618764,0.510670070861791)-- (7.449354863662415,2.3859862412970236);
\draw [line width=0.8pt] (7.449354863662415,2.3859862412970236)-- (12.,0.);
\draw [line width=0.8pt] (12.,0.)-- (11.,-3.);
\draw [line width=0.8pt] (11.,-3.)-- (5.761176214858738,-2.990541253882303);
\draw [line width=0.8pt] (2.305626368583439,0.9168791057503182)-- (10.057779342713298,1.018341713539659);
\draw [line width=0.8pt] (0.,-2.)-- (1.,2.);
\draw [line width=0.8pt] (1.,2.)-- (11.,2.);
\draw [line width=0.8pt] (11.,2.)-- (12.,-2.);
\draw [line width=0.8pt] (12.,-2.)-- (0.,-2.);
\begin{scriptsize}
\draw [fill=black] (2.305626368583439,0.9168791057503182) circle (2.0pt);
\draw [fill=black] (10.057779342713298,1.018341713539659) circle (2.0pt);
\end{scriptsize}

\end{tikzpicture}\caption{Doubly irregular containments are impossible.}
    \label{fig:dirreg}
\end{figure}

\subsubsection*{Inextendible holey family}\label{app:inext}

We end this section by marking an important difference between point configurations and holey families. While any finite set of points can be extended by adding another arbitrary point, and any \TO is also extendable \cite{BFSchSchS}, this is not the case for holey families.
Note that this does \emph{not} imply that the \CTO of the holey family is not extendable.
We do not know whether all \CTO's are extendable or not, but it is easy to see that all \TO's are extendable.
Indeed, just take any element $A$ and add a new element $A'$ such that $\o(A'BC)=\o(ABC)$ and $\o(AA'B)=\o(AA'C)$ for every $B$ and $C$.
If only $A$ or $A'$ occurs among four sets, they will trivially satisfy the interiority condition, while if the four sets are $A,A',B,C$, then $\o(ABA')=\o(BCA')=\o(CAA')$ is impossible, and if $\o(AA'C)=\o(A'BC)=\o(BAC)=1$, we automatically have $\o(AA'B)=1$.

\begin{thm}
	There exists a holey family of convex sets in the plane that cannot be extended to a larger holey family by adding one more convex set to it.
\end{thm}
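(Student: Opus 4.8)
The plan is to first translate extendability into a purely geometric intersection condition, and then to build a holey family in which this condition cannot be met, because any candidate set is forced to run through one of the pairwise intersections. For the reduction, suppose a convex set $X$ extends a holey family $\{A_1,\dots,A_n\}$ to a holey family. Being intersecting, $X$ meets every $A_i$; being $3$-intersection-free, $X\cap A_i\cap A_j=\emptyset$ for all $i\neq j$. Writing $A_i^{\circ}:=A_i\setminus\bigcup_{j\neq i}A_j$ for the \emph{private part} of $A_i$ (the points covered by $A_i$ alone) and $P:=\bigcup_{i<j}(A_i\cap A_j)$ for the forbidden \emph{pairwise region}, this means exactly that $X$ is disjoint from $P$ while still meeting every $A_i$; in particular $X\cap A_i\subseteq A_i^{\circ}$, so $X$ meets every private part. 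Thus it suffices to construct a holey family for which no convex set disjoint from $P$ can meet all of $A_1^{\circ},\dots,A_n^{\circ}$.

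The guiding principle for the construction is that a convex set meeting several private parts spread \emph{around} a point of $P$ is forced, by convexity, to swallow that point. The minimal clean instance is to fix a single forbidden point $f$ and surround it tightly by three private parts. Concretely, I would take two long thin convex sets $A,B$ crossing in a single small lens, fix a point $f\in A\cap B\subseteq P$, and add three further convex sets $C_1,C_2,C_3$ so that: (i) all $\binom{5}{2}$ pairs intersect and no three sets have a common point, so the family is holey; and (ii) each private part $C_k^{\circ}$ is a small region lying near one of three points placed around $f$ at roughly $120^{\circ}$ apart and at equal distance, chosen so that $f\in\mathrm{int}\,\mathrm{conv}$ of those three points with a definite margin. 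The idea is that the bulk of each $C_k$ is spent meeting the other four sets (guaranteeing the intersecting property), while only a small outward-pointing tip of $C_k$ remains private; the margin in (ii) guarantees that \emph{every} choice of one point per tip still encloses $f$.

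Granting (i)--(ii), inextendability is immediate. If $X$ were an admissible new set, then by the reduction $X$ is disjoint from $P$, so $X\cap C_k\subseteq C_k^{\circ}$ for each $k$. Picking $p_k\in X\cap C_k^{\circ}$, the three points $p_1,p_2,p_3$ lie in the three surrounding tips, whence (ii) gives $f\in\mathrm{int}\,\mathrm{conv}(p_1,p_2,p_3)\subseteq X$, using convexity of $X$. But $f\in A\cap B$, so $f\in X\cap A\cap B$, a triple intersection, contradicting that $X\cup\{A,B\}$ is $3$-intersection-free. Hence the family admits no extension.

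The entire difficulty is concentrated in realizing (i) and (ii) simultaneously: the three private tips must encircle $f$, yet each $C_k$ must reach far enough to meet $A$, $B$ and the other two $C$'s, all while keeping the various pairwise intersections mutually separated and away from $f$, so that no triple intersection is created and the tips genuinely stay private. The main obstacle is thus avoiding a ``pile-up'' of overlaps near $f$; I would resolve it by an explicit coordinate placement — $A,B$ as two thin slabs through $f$, and the $C_k$ as thin convex wedges whose inner portions overlap the other four sets along well-separated lenses while their outer tips remain uncovered — and by checking the finitely many pairwise intersections directly. As in the proof of Lemma~\ref{lem:int}, a small generic perturbation then puts the configuration in general position without creating any triple intersection, completing the construction.
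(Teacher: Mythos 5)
Your opening reduction is correct, and so is the final deduction from (i) and (ii); but the theorem lives entirely in the construction, which you never carry out---and the configuration you describe is in fact impossible. In any holey family, each member $C_k$ meets every other member in a nonempty compact set, and these four intersections $C_k\cap A$, $C_k\cap B$, $C_k\cap C_{k'}$, $C_k\cap C_{k''}$ are pairwise disjoint, because an overlap of any two of them would be a triple intersection. Hence $C_k\setminus C_k^{\circ}$, which by definition is exactly the union of these four sets, is a disjoint union of four nonempty compact sets; each of them is clopen inside that union, so $C_k\setminus C_k^{\circ}$ has at least four connected components, pairwise at positive distance. Your picture violates this: if ``only a small outward-pointing tip of $C_k$ remains private,'' then $C_k\setminus C_k^{\circ}$ is a convex set minus a cap at one extremity, which is connected---it cannot equal a union of four well-separated lenses. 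So in any genuine holey family the private part of $C_k$ must disconnect $C_k$ into at least four pieces: either it contains seams lying deep inside the configuration (between consecutive lenses), in which case condition (ii) fails and your transversal argument collapses, since the points $p_k$ may be chosen in the seams and nothing forces $f\in\mathrm{conv}(p_1,p_2,p_3)$; or else three of the four intersection pieces are tiny islands crammed inside the small ball around $t_k$, which forces $A$, $B$ and the other wedges into that ball, contrary to your placement (and one can check this option collapses as well, e.g.\ $C_1\cap C_2$ cannot lie in the ball around $t_1$ and in the ball around $t_2$ simultaneously). One could try to rescue the plan by confining each $C_k^{\circ}$ to a narrow cone with apex $f$ rather than a small ball---the hull argument survives for cones of half-angle below $30^{\circ}$---but that is a genuinely different, and still unproven, construction.

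For contrast, the paper's construction (Figure \ref{fig:holey}) consists of four disks: three large ones $A,B,C$ and a small disk $D$ sitting in their hollow, so that $\partial D$ is covered by $A\cup B\cup C$ except near the three vertices of the hollow. Any admissible new set $X$ must have a point $p$ on $\partial D$ outside all other sets, hence near one of these vertices, say the one where $\partial A$ and $\partial B$ cross; $X$ must also contain a point $q\in C$, and the segment $pq\subset X$ is then forced through a point already covered by two of the four disks, a triple intersection. Both arguments share the idea of using convexity to push $X$ through doubly covered territory, but the paper's configuration exists, whereas the one your argument requires does not.
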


\begin{figure}[!ht]
	\centering
	\definecolor{uuuuuu}{rgb}{0.26666666666666666,0.26666666666666666,0.26666666666666666}
	\begin{tikzpicture}[line cap=round,line join=round,>=triangle 45,x=3.0cm,y=3.0cm]
	\clip(6.898382281717928,-3.737341577021789) rectangle (8.1,-2.652694238714492);
	\draw [line width=1.pt,fill=black,fill opacity=0.10000000149011612] (9.,-4.) circle (4.65cm);
	\draw [line width=1.pt,fill=black,fill opacity=0.10000000149011612] (7.5,-1.4019237886466835) circle (4.65cm);
	\draw [line width=1.pt,fill=black,fill opacity=0.10000000149011612] (6.,-4.) circle (4.65cm);
	\draw [line width=1.pt,fill=black,fill opacity=0.10000000149011612] (7.5,-3.133974596215561) circle (1.2262692927881012cm);
	\draw (6.951885065221877,-3.31) node[anchor=north west] {$A$};
	\draw (7.88,-3.31) node[anchor=north west] {$B$};
	\draw (7.107529526324273,-2.65) node[anchor=north west] {$C$};
	\draw (7.42,-3.041805391470473) node[anchor=north west] {$D$};
	\begin{scriptsize}
	\draw [fill=uuuuuu] (7.5,-1.4019237886466835) circle (2.5pt);
	\end{scriptsize}
	\end{tikzpicture}
	\caption{A holey family consisting of four disks that cannot be extended.}\label{fig:holey}
\end{figure}
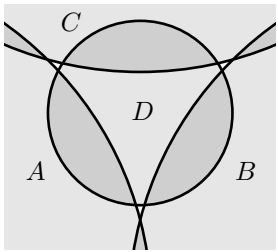

\begin{proof}
	A simple but important observation we use is that in a holey family of at least three convex sets the boundary $\partial X$ of any one of them, $X$, intersects the boundaries of each of the other convex sets in some point not contained in any other of the sets.
	
	Our family consists of four disks, three big, $A,B,C$, and one small, $D$, as depicted in Figure \ref{fig:holey}. If there exists a convex set $X$ such that together with these four disks they form a holey family, then $\partial X$ must intersect $\partial D$ in a point $p$ close to one of the three vertices of $\hollow(ABC)$, say, close to the one which is the intersection of $\partial A$ and $\partial B$. $X$ must also contain some point $q$ of $C$. A simple case analysis shows that the segment $pq$ (contained in $X$ by convexity) contains a point contained in two of the four original disks, creating a triple-intersection with $X$, contradicting that the family is holey.
\end{proof}

\section{Small cases}\label{app:small}

Here we examine which \TO's on few elements are realizable with a holey family of convex sets, similarly as was done in \cite{GP80} for allowable sequences and order types. In case of 4 elements, it follows from Lemma \ref{lem:int} that all system definitions coincide:

\begin{claim}
	On four elements, there are two \pTO's, two \TO's and two \CTO's up to isomorphism.
\end{claim}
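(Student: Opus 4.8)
The plan is to reduce everything to counting abstract \TO's on the four-element set $\{1,2,3,4\}$, and then to realize the resulting classes both by points and by holey families. A total orientation on $\{1,2,3,4\}$ is recorded by the sign vector $(a,b,c,d)=(\o(123),\o(124),\o(134),\o(234))\in\{\pm1\}^4$, giving $16$ candidates. First I would write out the four possible containments in terms of this vector: a short computation using only the antisymmetry relations shows that $4\in conv(123)$ iff $b=d=-c$, that $3\in conv(124)$ iff $a=c=-d$, that $2\in conv(134)$ iff $-a=b=d$, and that $1\in conv(234)$ iff $a=c=-b$. Translating the interiority condition (together with its all-$-1$ version, obtained by transposing two of the labels) into a constraint on each of these four containments, I would check that every one of the constraints is violated by the \emph{same} two vectors, the ``alternating'' ones $(+,-,+,-)$ and $(-,+,-,+)$, and by no others. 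Hence exactly $14$ of the $16$ sign vectors are \TO's.

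Next I would organize these $14$ vectors by the isomorphism-invariant $k$, the number of containment relations $s\in conv(pqr)$. Plugging the four explicit conditions into the $14$ vectors shows that $k\in\{0,1\}$, with six vectors having $k=0$ and eight having $k=1$ (in particular no \TO on four elements has two nested containments). To see that each value of $k$ gives a single class under relabelling and global sign flip, note that $k=0$, i.e.\ no element lying in the hull of the other three, is the convex-position type, unique up to isomorphism; and for $k=1$ I would relabel so that the containment is $4\in conv(123)$, whereupon $b=d=-c$ and interiority force $a=b$, so that $(a,b,c,d)=(v,v,-v,v)$ is determined up to the global sign $v$. Thus $k=1$ is also a single class, the triangle-with-interior-point type. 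This yields exactly two \TO's.

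Finally I would transfer the count to \pTO's and \CTO's. Since point order types satisfy interiority, every \pTO is a \TO, and by the Interiority Lemma (Lemma \ref{lem:int}) every \CTO is a \TO; hence each of these families has at most two elements. Both classes are realized on both sides: four points in convex position and a triangle with an interior point give the two \pTO's, while for \CTO's one exhibits a holey family of each type — four long, thin, ``line-like'' convex sets in generic position realize the $k=0$ (convex) type, and the three large disks together with the small central disk of Figure \ref{fig:holey} realize the $k=1$ (interior) type. Therefore each of the three counts equals two.

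The main obstacle is the bookkeeping: verifying that precisely the two alternating vectors fail interiority, and that $k$ never exceeds $1$, is a short but slightly delicate case analysis that depends on getting the four containment formulas above exactly right with the antisymmetry relations. A secondary point to be careful about is that the exhibited convex-set configurations are \emph{genuinely} holey and induce the claimed orientations, rather than merely looking correct in a picture. Both difficulties are localized, however: the first collapses once the four explicit containment conditions are in hand, and the second reduces to checking pairwise intersection, absence of any triple intersection, and the sign of a single representative hollow in each of the two small configurations.
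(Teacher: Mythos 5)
Your proposal is correct and is essentially the paper's own (largely implicit) argument made explicit: the paper presents the claim as an immediate consequence of Lemma \ref{lem:int} — on four elements interiority is the only constraint, and both surviving types are realizable by points and by holey families — and your sign-vector enumeration (16 candidates, exactly the two alternating vectors $(+,-,+,-)$, $(-,+,-,+)$ fail interiority, the remaining 14 split into the convex-position class and the point-in-triangle class) is precisely the verification the paper leaves to the reader, with correct containment formulas and valid realizations (crossing ``line-like'' sets for $k=0$, the four disks of Figure \ref{fig:holey} for $k=1$). The only soft spot is that the single-orbit claim for the six $k=0$ vectors is asserted rather than checked; it does hold, since the all-positive vector has stabilizer in $S_4$ exactly the cyclic group generated by $(1234)$, giving an orbit of size six, so this is not a genuine gap.
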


In case of 5 elements, a \pTO is determined by the size of the convex hull of the realizing point set, which gives three options, but by enumeration, there are six combinatorially different \TO's.
We could realize all of them with convex sets (see Figures \ref{fig:otponthathalmaz} and \ref{fig:othalmaz2}) which implies:

\begin{claim}
	Any one of the (up to isomorphism) six \TO's on five elements is a \CTO, i.e., it is representable by a holey family of convex sets.
\end{claim}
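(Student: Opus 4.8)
The plan is to prove the claim by explicit construction, handling all six \TO's one at a time. First I would pin down the enumeration. Fixing the labels $1,2,3,4,5$ and using the reflection symmetry $\o(ABC)=-\o(ACB)$ to normalize one triple to $+1$, I would run through the sign assignments to the remaining nine triples, discarding those that violate the interiority condition on any of the five quadruples; reducing the survivors modulo the action of $S_5$ together with global reflection leaves exactly six combinatorial types, in agreement with the count of \TO's. Three of these are precisely the order types of five points in general position, distinguished by whether the convex hull has $3$, $4$, or $5$ vertices; the remaining three are \TO's that admit no point realization, and these are the interesting cases.

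For the three \pTO's I would build holey families directly. When the hull has five vertices the configuration is in convex position, and the Goodman--Pollack observation already realizes it by five lines ordered according to their slopes. For hull sizes $4$ and $3$ a pure line family no longer suffices, since any holey family of lines is forced into convex position; instead I would keep the outer elements as lines (or long thin convex sets) and replace each interior point by a small convex set positioned so that it is ``captured'' by the others in the convex sense. In each case the ten orientations are read off through Lemma \ref{lem:hollow}(d): one samples a point from each pairwise intersection and checks the orientation of the resulting point triangles, which by the lemma equals the orientation of the corresponding triple of sets.

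The heart of the argument is the three \TO's that are not point order types, shown in Figures \ref{fig:otponthathalmaz} and \ref{fig:othalmaz2}. Here no placement of points works, so I would exploit the extra freedom that convex sets provide over points: the orientation of a triple is governed by its hollow triangle, whose orientation can be steered by the shapes and relative positions of the three sets rather than by single representative points. Concretely, starting from a nearby realizable family I would deform or enlarge selected sets---for instance stretching one set around another---so as to flip exactly those triples that separate the target \TO from its nearest point-realizable neighbor, while keeping every pair intersecting and avoiding any triple intersection. The explicit figures are designed to meet all of these constraints simultaneously.

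Finally, for each of the six constructions I would verify three things: that the family is intersecting, that it is $3$-intersection-free (so that each orientation is actually defined and Lemma \ref{lem:hollow} applies), and that all ten triple orientations match the prescribed \TO. Since there are only ten triples and five quadruples to inspect, each verification is finite and routine. I expect the main obstacle to be the construction step for the three non-point-realizable \TO's: flipping precisely the intended triples by deforming convex sets, without accidentally creating a common point of three sets or destroying pairwise intersection, is a delicate global constraint that the realizing figures must be engineered to satisfy.
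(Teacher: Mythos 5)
Your proposal takes essentially the same approach as the paper: the paper's proof of this claim is exactly an enumeration showing there are six \TO's on five elements (three of which are five-point order types and three of which are not), followed by explicit realizations as holey families of lines, segments and triangles in Figures \ref{fig:otponthathalmaz} and \ref{fig:othalmaz2}, with orientations verified via Lemma \ref{lem:hollow}(d). Your 3+3 split, the line realization for the convex-position case, the use of lines plus small convex sets for the hull-size 3 and 4 cases, and the engineered constructions for the three non-point-realizable \TO's all match what the paper's figures carry out.
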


\begin{figure}[!h]
	\begin{center}
		\begin{minipage}{.25\textwidth}
			\centering
			\input{Figures/5pont1}
		\end{minipage}
		\begin{minipage}{.25\textwidth}
			\centering
			\input{Figures/5pont2}
		\end{minipage}
		\begin{minipage}{.25\textwidth}
			\centering
			\input{Figures/5pont3}
		\end{minipage}
	\end{center}
	
	\begin{center}
		\begin{minipage}{.25\textwidth}
			\centering
			\input{Figures/5halmaz1}
		\end{minipage}
		\begin{minipage}{.25\textwidth}
			\centering
			\input{Figures/5halmaz2}
		\end{minipage}
		\begin{minipage}{.25\textwidth}
			\centering
			\input{Figures/5halmaz3}
		\end{minipage}
		\caption{Three \TO's on five elements can be realized as a \pTO and as a \CTO.}
		\label{fig:otponthathalmaz}
	\end{center}
	
	\begin{center}
		\begin{minipage}{.32\textwidth}
			\centering
			\scalebox{0.8}{\input{Figures/5halmaz4}}
		\end{minipage}
		\begin{minipage}{.32\textwidth}
			\centering
			\scalebox{0.8}{\input{Figures/5halmaz5}}
		\end{minipage}
		\begin{minipage}{.32\textwidth}
			\centering
			\scalebox{0.8}{\input{Figures/5halmaz6}}
		\end{minipage}
		\caption{Three \TO's on five elements can be realized as a \CTO but not as a \pTO.}
		\label{fig:othalmaz2}
	\end{center}
\end{figure}

\begin{figure}[!h]
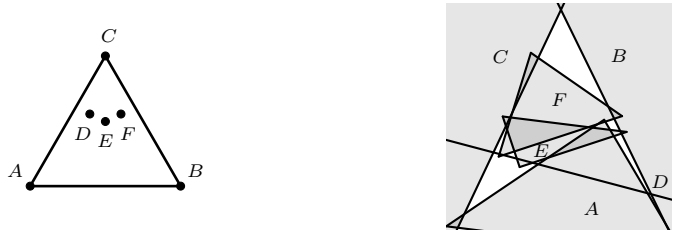

	\begin{center}
		\begin{minipage}{.40\textwidth}
			\centering
			\input{Figures/hatpont}
		\end{minipage}
		\begin{minipage}{.33\textwidth}
			\centering
			\input{Figures/h11}
		\end{minipage}
		\caption{Six-point set and its representation ($D$ is a segment, the other sets triangles).
		Any representation must have an irregular containment; here $D,E\in conv(ABC)$ are both irregulars.}
		\label{fig:hatponthathalmaz}
	\end{center}
\end{figure}

In case of 6 elements, it can be checked by enumeration that in total there are 253 \TO's on 6 elements, which is much more than 16, the number of \pTO's realizable with 6 points, both numbers counted up to isomorphism.

\def\kicsiny{0.95}
\begin{figure}[p]
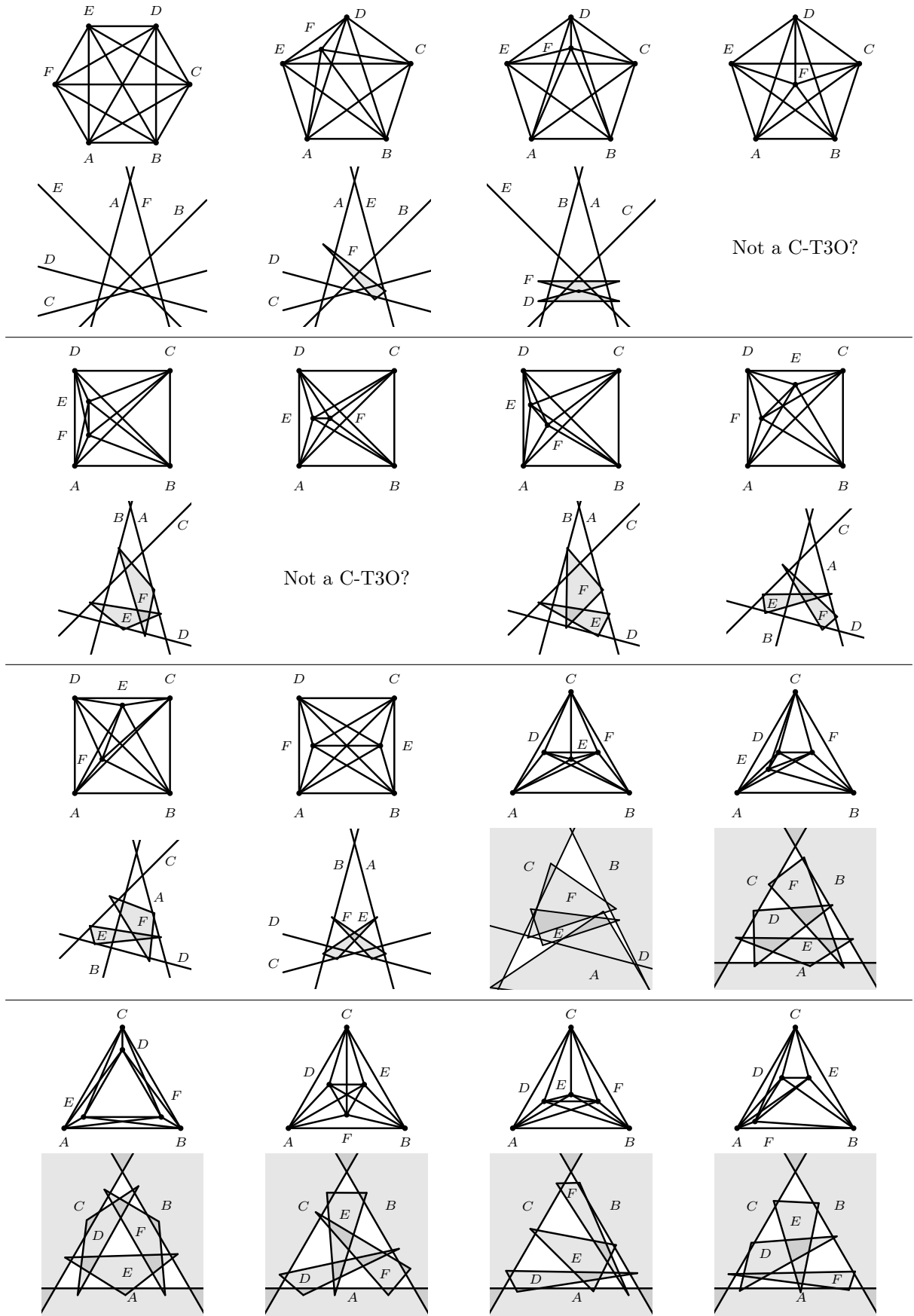

        	\begin{center}
			\begin{minipage}{.24\textwidth}
				\centering
				\scalebox{\kicsiny}{\input{Figures/p1}}
			\end{minipage}
			\begin{minipage}{.24\textwidth}
				\centering
				\scalebox{\kicsiny}{\input{Figures/p2}}
			\end{minipage}
			\begin{minipage}{.24\textwidth}
				\centering
				\scalebox{\kicsiny}{\input{Figures/p3}}
			\end{minipage}
			\begin{minipage}{.24\textwidth}
				\centering
				\scalebox{\kicsiny}{\input{Figures/p4}}
			\end{minipage}
		\end{center}
		\vspace{-20pt}
		\begin{center}
			\begin{minipage}{.24\textwidth}
				\centering
				\scalebox{\kicsiny}{\input{Figures/h1}}
			\end{minipage}
			\begin{minipage}{.24\textwidth}
				\centering
				\scalebox{\kicsiny}{\input{Figures/h2}}
			\end{minipage}
			\begin{minipage}{.24\textwidth}
				\centering
				\scalebox{\kicsiny}{\input{Figures/h3}}
			\end{minipage}
			\begin{minipage}{.24\textwidth}
				\centering
			Not a \CTO?
			\end{minipage}
		\end{center}
	\vspace{-5pt}
		\hrule
	\vspace{-10pt}
        \begin{center}
			\begin{minipage}{.24\textwidth}
				\centering
				\scalebox{\kicsiny}{\input{Figures/p5}}
			\end{minipage}
			\begin{minipage}{.24\textwidth}
				\centering
				\scalebox{\kicsiny}{\input{Figures/p6}}
			\end{minipage}
			\begin{minipage}{.24\textwidth}
				\centering
				\scalebox{\kicsiny}{\input{Figures/p7}}
			\end{minipage}
			\begin{minipage}{.24\textwidth}
				\centering
				\scalebox{\kicsiny}{\input{Figures/p8}}
			\end{minipage}
		\end{center}
		\vspace{-20pt}
		\begin{center}
			\begin{minipage}{.24\textwidth}
				\centering
				\scalebox{\kicsiny}{\input{Figures/h5}}
			\end{minipage}
			\begin{minipage}{.24\textwidth}
				\centering
				Not a \CTO?
			\end{minipage}
			\begin{minipage}{.24\textwidth}
				\centering
				\scalebox{\kicsiny}{\input{Figures/h7}}
			\end{minipage}
			\begin{minipage}{.24\textwidth}
				\centering
				\scalebox{\kicsiny}{\input{Figures/h8}}
			\end{minipage}
		\end{center}
	\vspace{-5pt}
		\hrule
	\vspace{-10pt}
		\begin{center}
			\begin{minipage}{.24\textwidth}
				\centering
				\scalebox{\kicsiny}{\input{Figures/p9}}
			\end{minipage}
			\begin{minipage}{.24\textwidth}
				\centering
				\scalebox{\kicsiny}{\input{Figures/p10}}
			\end{minipage}
			\begin{minipage}{.24\textwidth}
				\centering
				\scalebox{\kicsiny}{\input{Figures/p11}}
			\end{minipage}
			\begin{minipage}{.24\textwidth}
				\centering
				\scalebox{\kicsiny}{\input{Figures/p12}}
			\end{minipage}
		\end{center}
		\vspace{-20pt}
		\begin{center}
			\begin{minipage}{.24\textwidth}
				\centering
				\scalebox{\kicsiny}{\input{Figures/h9}}
			\end{minipage}
			\begin{minipage}{.24\textwidth}
				\centering
				\scalebox{\kicsiny}{\input{Figures/h10}}
			\end{minipage}
			\begin{minipage}{.24\textwidth}
				\centering
				\scalebox{\kicsiny}{\input{Figures/h11}}
			\end{minipage}
			\begin{minipage}{.24\textwidth}
				\centering
				\scalebox{\kicsiny}{\input{Figures/h12}}
			\end{minipage}
		\end{center}
	\vspace{-5pt}
		\hrule
	\vspace{-10pt}
		\begin{center}
			\begin{minipage}{.24\textwidth}
				\centering
				\scalebox{\kicsiny}{\input{Figures/p13}}
			\end{minipage}
			\begin{minipage}{.24\textwidth}
				\centering
				\scalebox{\kicsiny}{\input{Figures/p14}}
			\end{minipage}
			\begin{minipage}{.24\textwidth}
				\centering
				\scalebox{\kicsiny}{\input{Figures/p15}}
			\end{minipage}
			\begin{minipage}{.24\textwidth}
				\centering
				\scalebox{\kicsiny}{\input{Figures/p16}}
			\end{minipage}
		\end{center}
		\vspace{-25pt}
		\begin{center}
			\begin{minipage}{.24\textwidth}
				\centering
				\scalebox{\kicsiny}{\input{Figures/h13}}
			\end{minipage}
			\begin{minipage}{.24\textwidth}
				\centering
				\scalebox{\kicsiny}{\input{Figures/h14}}
			\end{minipage}
			\begin{minipage}{.24\textwidth}
				\centering
				\scalebox{\kicsiny}{\input{Figures/h15}}
			\end{minipage}
			\begin{minipage}{.24\textwidth}
				\centering
				\scalebox{\kicsiny}{\input{Figures/h16}}
			\end{minipage}
		\end{center}
		\vspace*{-3pt}
		\caption{\CTO's representing $6$-point order types.}
		\label{fig:ordertypes}
	\end{figure}
	
We have managed to realize 14 out of the 16 \pTO's as \CTO's, while we conjecture that the other two cannot be realized. The list of these realizations can be found in Figure \ref{fig:ordertypes}.
A difficult-to-realize example is the one depicted in Figure \ref{fig:hatponthathalmaz} left; see the realization in Figure \ref{fig:hatponthathalmaz} right.
The difficulty is due to the following statement.

\begin{prop}\label{prop:noreg}
    If the containments $D,E,F\in conv(ABC)$ are all regular, then there is no realization of the order type of the point set in Figure \ref{fig:hatponthathalmaz} left by convex sets.
\end{prop}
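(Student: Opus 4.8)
The plan is to assume, for contradiction, a holey realization $A,B,C,D,E,F$ of the order type of Figure~\ref{fig:hatponthathalmaz}~(left) in which all three containments $D,E,F\in conv(ABC)$ are regular, and then to show that the regular structure rigidly forces $\o(DEF)=-1$, whereas the target order type has $\o(DEF)=+1$. Denote by $\alpha,\beta,\gamma$ the three boundary arcs of $\hollow(ABC)$ coming from $\partial A,\partial B,\partial C$ (Lemma~\ref{lem:hollow}(b)), meeting at the hollow-vertices $v_{AB},v_{BC},v_{CA}$. Since each $X\in\{D,E,F\}$ is \emph{regularly} contained, the curve $X\cap\hollow(ABC)\cap\partial W$ is connected for every $W\in\{A,B,C\}$; writing $I_W(X)$ for this arc, the band $X\cap\hollow(ABC)$ meets all three of $\alpha,\beta,\gamma$ in single sub-arcs, hence cuts $\hollow(ABC)$ into its central part together with exactly the three corner sub-hollows $\hollow(ABX)$, $\hollow(BCX)$, $\hollow(CAX)$ sitting at $v_{AB},v_{BC},v_{CA}$ (Observation~\ref{obs:deabc}).

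From Figure~\ref{fig:hatponthathalmaz}~(left) one records $\o(CDE)=\o(CDF)=\o(CEF)=+1$, $\o(ADE)=\o(ADF)=\o(AEF)=\o(BDE)=\o(BDF)=\o(BEF)=-1$, and $\o(DEF)=+1$. For a fixed $W$ and two bands $X,Y$, Lemma~\ref{lem:hollow}(d) lets me evaluate $\o(WXY)$ as the orientation of a triangle $xyz$ with $x\in X\cap Y$ (which lies in $\hollow(ABC)$ by an argument as in Claim~\ref{claim:subknuth}), $y\in W\cap Y$ near $I_W(Y)$ and $z\in W\cap X$ near $I_W(X)$; as $x$ lies on the hollow side of the arc of $W$, the sign of $\o(WXY)$ is determined exactly by the order in which $I_W(X)$ and $I_W(Y)$ appear along that arc. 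Carrying this out on each of $\alpha,\beta,\gamma$ turns the nine mixed orientations above into total orders of $\{I_W(D),I_W(E),I_W(F)\}$ along the three arcs. The outcome is that $D$ touches $\alpha$ and $\gamma$ adjacent to $v_{CA}$ (so $D$ \emph{caps} the corner $v_{CA}$, with $\hollow(CAD)$ small), that $F$ symmetrically caps $v_{BC}$, and that $E$ meets all three arcs in their middle portions.

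It remains to read off the sign of $\o(DEF)$ from this capping pattern. Using Lemma~\ref{lem:hollow}(d) once more, $\o(DEF)=\o(xyz)$ with $x\in E\cap F$, $y\in F\cap D$, $z\in D\cap E$, and all three pairwise intersections lie in $\hollow(ABC)$. The arc orders together with convexity and regularity pin down their cyclic position: $z=D\cap E$ lies toward the $v_{CA}$ side, $x=E\cap F$ toward the $v_{BC}$ side, and $y=D\cap F$ toward the uncapped corner $v_{AB}$. Reading the orientation of this triangle gives $\o(xyz)=-1$, that is $\o(DEF)=-1$, contradicting $\o(DEF)=+1$.

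The genuine obstacle is the final step: proving that the capping pattern \emph{forces} $\o(DEF)=-1$ rather than merely being consistent with it. The clean way is to note that $\hollow(ABC)$ is homeomorphic to a disk on whose boundary the arcs $I_W(\cdot)$ occur in the computed cyclic order, and that three convex sets inscribed in the disk touching the boundary in these prescribed arcs behave like three pseudolines whose central triangle $\hollow(DEF)$ has an orientation determined purely by the cyclic order of their boundary traces. One can make this precise by deforming the configuration to the straight model (replacing the arcs by the straight sides of a triangle and each band by the triangle on its three touch-points), observing that $\o(DEF)$ can change only when a triple intersection is created, which regularity forbids; the straight model is then an elementary computation yielding $-1$. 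A careful write-up must also dispose of the degenerate cases in which two of the sub-arcs $I_W(\cdot)$ overlap or a band fails to meet an arc transversally, which can be removed by the same perturbation used in the proof of Lemma~\ref{lem:int}.
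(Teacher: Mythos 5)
Your strategy coincides with the paper's: regularity gives connected traces $I_W(X)$ on the three arcs of $\partial\hollow(ABC)$; Lemma \ref{lem:hollow}(d) turns the nine mixed orientations into linear orders of the traces along each arc; for the order type of Figure \ref{fig:hatponthathalmaz} this forces the unique cyclic pattern you describe (your capping pattern is exactly the second configuration of Figure \ref{fig:tizeset}); and the contradiction is that this pattern forces $\o(DEF)=-1$ while the point set has $\o(DEF)=+1$. Your orientation data and the derivation of the pattern are correct, and placing the pairwise intersections $D\cap E$, $D\cap F$, $E\cap F$ inside $\hollow(ABC)$ via Claim \ref{claim:subknuth} is legitimate here (after relabeling the base triple: e.g.\ $E\in conv(BCD)$ holds in this order type, so the claim applies verbatim).

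The genuine gap is the step you yourself flag: proving that the boundary pattern \emph{determines} $\o(DEF)$. This is precisely the paper's standalone Claim preceding the proposition, and your proposed justification does not establish it. The deformation argument --- ``$\o(DEF)$ can change only when a triple intersection is created, which regularity forbids'' --- is circular: regularity and $3$-intersection-freeness are hypotheses on the \emph{given} configuration and say nothing about the intermediate configurations of a homotopy; for the continuity argument to work you would have to construct a deformation to the straight model along which the six sets stay convex, pairwise intersecting and $3$-intersection-free, and producing such a deformation is essentially equivalent to the assertion being proved. The paper closes this gap with a purely static argument that needs no continuity: in the cyclic order of the nine traces one can always extract six traces in alternating order $DEFDEF$ (or $DFEDFE$); choosing one point in each of these six traces and joining the two points belonging to the same set gives three segments, contained in $D$, $E$, $F$ respectively by convexity, which pairwise cross because of the regularity of the containments; Corollary \ref{cor:subset} then says that the orientation of this triple of segments equals $\o(DEF)$, and the orientation of three pairwise crossing chords is read off combinatorially from the cyclic order of their six endpoints, giving $-1$. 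Your own ``straight model'' (triangles on the touch points) could be salvaged the same way --- the inscribed triangles are \emph{subsets} of $D,E,F$, so Corollary \ref{cor:subset} applies with no deformation at all --- but then you still owe the proof that the three inscribed triangles pairwise intersect; that is exactly where regularity must be used, and it is the step your sketch leaves unproved.
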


The remainder of this section contains the proof of this proposition.

The main idea of the proof is the following. Suppose we have $\o(ABC)=1$ and all three of the containments ${D, E, F\in conv(ABC)}$ are regular. Then we can determine the orientation of each triple simply  considering the order in which $D\cap \partial A, E\cap \partial A, F\cap \partial A, D\cap \partial B, E\cap \partial B,  F\cap \partial B, D\cap \partial C, E\cap \partial C,  F\cap \partial C$ follow each other on the boundary of $\hollow(ABC)$. For example, if $D\cap \partial A$ comes before $E\cap \partial A$ as we go around $\partial \hollow(ABC)$ counterclockwise, then $\o(AED)=1$, otherwise $\o(AED)=-1$. The only nontrivial case is the orientation of $DEF$:

\begin{claim}
	If in a holey family all three of the containments $D,E,F\in conv(ABC)$ are regular,
	then the order in which $D\cap \partial A, E\cap \partial A, F\cap \partial A, D\cap \partial B, E\cap \partial B,  F\cap \partial B, D\cap \partial C, E\cap \partial C,  F\cap \partial C$
	follow each other around the boundary of $\hollow(ABC)$ determines the orientation of $DEF$.
\end{claim}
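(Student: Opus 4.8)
The plan is to reduce $\o(DEF)$ to the orientation of an ordinary point triangle and then to read that triangle off the boundary of $\hollow(ABC)$. Assume without loss of generality that $\o(ABC)=1$, so that the three boundary arcs $\alpha_A\subset\partial A$, $\alpha_B\subset\partial B$, $\alpha_C\subset\partial C$ of $\hollow(ABC)$ occur in this counterclockwise cyclic order by Lemma~\ref{lem:hollow}(c)--(d). Since the family is holey, $D,E,F$ have no common point, so $\o(DEF)\in\{\pm1\}$. Choosing any $p\in E\cap F$, $q\in F\cap D$, $r\in D\cap E$, Lemma~\ref{lem:hollow}(d) applied with $A,B,C$ replaced by $D,E,F$ gives $\o(DEF)=\o(pqr)$, independently of the choice. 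The first step is to locate these three pairwise intersections: each is connected and, by $3$-intersection-freeness, disjoint from $A\cup B\cup C$, hence lies in a single component of $\R^2\setminus(A\cup B\cup C)$; using Observation~\ref{obs:deabc} and the fact that each of $D,E,F$ meets $\hollow(ABC)$, I would argue that this component is the bounded one, i.e. $D\cap E,\,E\cap F,\,F\cap D\subset\hollow(ABC)$. Thus $p,q,r$ may be taken inside the hollow.

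\textbf{From boundary arcs to orientation.} Regularity of the three containments means, by the remark following the definition of regular containment, that each $X\in\{D,E,F\}$ meets $\partial\hollow(ABC)$ in one arc $I_A^X=X\cap\alpha_A$, $I_B^X=X\cap\alpha_B$, $I_C^X=X\cap\alpha_C$ on each wall, and that $X\cap\overline{\hollow(ABC)}$ is bounded by these three wall-arcs together with three convex arcs of $\partial X$ running across the hollow near its three corners. On each wall the three arcs appear in some order $\pi_A,\pi_B,\pi_C$ of $\{D,E,F\}$, and the cyclic order of the nine arcs is exactly the concatenation $\pi_A\pi_B\pi_C$, the block order being fixed by $\o(ABC)=1$. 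I would then show that $(\pi_A,\pi_B,\pi_C)$ determines $\o(pqr)$. The geometric content is that the three wall directions are the edge directions of the hull triangle of the hollow and hence positively span the plane, and that the order in which $D,E,F$ reach a wall controls the side of the hollow on which the corresponding pairwise intersection sits. The only obstruction to determinacy would be a degenerate placement making $p,q,r$ collinear, but collinearity means $\o(pqr)=\o(DEF)=0$, impossible in a holey family; so for admissible orders the orientation is forced.

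\textbf{Main obstacle.} The delicate point is this last implication: for general convex sets the arc $I_A^X$ is \emph{not} literally the projection of a ``center'' of $X$ onto $\alpha_A$, so one cannot simply invoke a projection lemma to pass from the three wall orders to $\o(pqr)$. I expect to handle this by a deformation argument: moving one of $D,E,F$ continuously while preserving holeyness, regularity of all three containments, and the order of the arcs on every wall, the points $p,q,r$ can be chosen to vary continuously and never become collinear (that would create a triple point), so $\o(pqr)$ stays constant; a change of the cyclic order would require two arcs to swap on some wall, which is exactly what is excluded. It then remains to check that the space of holey realizations inducing a fixed admissible cyclic word $\pi_A\pi_B\pi_C$ is connected, which I would attempt by pushing each $X$ one at a time toward a standard inscribed convex body while monitoring that no wall order changes and no triple intersection appears; on this connected space $\o(DEF)$ equals the value computed on the standard representative, and is therefore a function of the cyclic order alone. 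Establishing this connectivity, equivalently ruling out that a forced wall swap occurs along every path, is the crux of the argument.
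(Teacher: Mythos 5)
Your reduction is sound as far as it goes: passing from $\o(DEF)$ to $\o(pqr)$ with $p\in E\cap F$, $q\in F\cap D$, $r\in D\cap E$ via Lemma \ref{lem:hollow}(d), and locating these pairwise intersections inside $\hollow(ABC)$, are both legitimate steps. But the proof stops exactly where the mathematical content begins. Everything rests on the claim in your final paragraph that the space of holey, regular realizations inducing a fixed cyclic word $\pi_A\pi_B\pi_C$ is path-connected, and you explicitly leave this unproved ("the crux"). This is not a removable technicality: connectivity of such realization spaces is typically at least as hard as the statement being proved (compare the stretchability literature for order types, where analogous connectivity statements fail), and nothing available in the paper (Lemma \ref{lem:hollow}, Observation \ref{obs:deabc}, Corollary \ref{cor:subset}) supplies it. Moreover, even granting connectivity, you would still need to exhibit a standard representative for every admissible cyclic word and compute $\o(DEF)$ on it, i.e., an enumeration of cases; so the deformation argument does not even spare you the case analysis, it only postpones it. (A small side issue: collinearity of $p,q,r$ would not "mean $\o(DEF)=0$"; the correct statement is that it would contradict Lemma \ref{lem:hollow}(d), since all choices of $p,q,r$ must give the same nonzero orientation.)

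The paper closes this gap with a short direct argument you could run instead of the deformation. First, a purely combinatorial check: writing the wall orders as $(RST\vert UVW\vert XYZ)$, the cyclic sequence of the nine arcs always contains six arcs in alternating order $DEFDEF$ or $DFEDFE$ (two cases, according to whether the first sets $R,U,X$ on the three walls are pairwise distinct or two of them coincide). Then pick one point from each of these six arcs and join the two points belonging to the same set, obtaining segments $s_D\subset D$, $s_E\subset E$, $s_F\subset F$. Because their endpoints interleave along $\partial\hollow(ABC)$ and the containments are regular, these three segments pairwise intersect; hence Corollary \ref{cor:subset}, applied with $D,E,F$ in the role of the ambient sets and the segments as the subsets, gives $\o(s_Ds_Es_F)=\o(DEF)$. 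Since the orientation of three pairwise crossing segments is determined by the cyclic order of their six endpoints, $\o(DEF)$ is read off directly from the cyclic word, with no appeal to any global structure of the space of realizations.
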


\begin{proof}
	Without loss of generality, we can assume $\o(ABC)=1$.
	If $D$, $E$ and $F$ intersect $A$, in an order $RST$, $B$ in an order $UVW$ and $C$ in an order $XYZ$ going around $\hollow(ABC)$ in a positive order (where $R$ and the other just introduced letters are variables that stand for $D,E$ and $F$, i.e., $\lbrace R,S,T\rbrace=\lbrace U,V,W\rbrace=\lbrace X,Y,Z\rbrace=\lbrace D,E,F\rbrace$), denote this configuration by $\left(RST\vert UVW\vert XYZ\right)$.
	
	If $R$, $U$ and $X$ are pairwise different, the cyclic order $RSTUVWXYZ$ contains a $DEFDEF$ or a $DFEDFE$, since one of $S$ and $T$ equals to $X$, one of $V$ and $W$ equals to $R$ and one of $Y$ and $Z$ equals to $U$. Choosing these along with $X$, $Y$ and $Z$, we get the desired order.
	
	If two of $R$, $U$ and $X$ are identical, say, $R=U$, we also get one of the above two cyclic orders: $R$, $S$, $T$, $U$ suffice along with one of $V$ or $W$ and with one of $X$, $Y$ or $Z$.
	
	Take a point from each of these six boundary parts and  connect the points that correspond to the same sets from $D,E,F$, e.g., connect a point from $D\cap \partial A$ to a point from $D\cap \partial B$.
	The resulting three segments pairwise intersect each other because of the regularity of the containment.
	The orientation of these segments is determined by the order of their endpoints, and this also determines the orientation of $D$, $E$ and $F$ because of Corollary \ref{cor:subset}.
\end{proof}

It is not hard to enumerate the combinatorially different possible options for the order in which $D\cap \partial A, E\cap \partial A, F\cap \partial A, D\cap \partial B, E\cap \partial B,  F\cap \partial B, D\cap \partial C, E\cap \partial C,  F\cap \partial C$ can follow each other; these 10 cases can be read off from Figure \ref{fig:tizeset}.

\begin{claim}
	Out of the ten combinatorially different configurations described above, seven are realizable, while the other three are not.
\end{claim}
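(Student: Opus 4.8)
The plan is to normalise the picture, reduce to a finite combinatorial check, and then treat the two directions separately: realizability by explicit drawings, and non-realizability through the interiority condition of Lemma~\ref{lem:int}.

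First I would fix $\o(ABC)=1$, so that the three boundary pieces of $\partial\hollow(ABC)$ lying on $A$, $B$, $C$ occur in a fixed cyclic (say counterclockwise) order, and each of $D,E,F$ meets every one of these three arcs in a single sub-arc by regularity. A configuration is then exactly the datum $(RST\vert UVW\vert XYZ)$ of the three induced orders, and I would record the reduction of the $6^3=216$ raw orders to the $10$ combinatorial types of Figure~\ref{fig:tizeset}, using the symmetries that relabel $\{D,E,F\}$ and cyclically rotate $(A,B,C)$ (both preserving $\o(ABC)=1$). The structural fact to state up front is that, since the family is holey, none of $D\cap E$, $E\cap F$, $F\cap D$ can meet $int(A)\cup int(B)\cup int(C)$ (that would be a triple intersection); hence all three lie in $\hollow(ABC)$, so $D,E,F$ themselves form a holey family with $\hollow(DEF)\subset\hollow(ABC)$, and by the previous claim the configuration determines $\o(DEF)$. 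Consequently each configuration determines the full orientation of $\{A,B,C,D,E,F\}$: we have $\o(ABC)=1$ and $\o(ABX)=\o(BCX)=\o(CAX)=1$ for $X\in\{D,E,F\}$, the orientations $\o(A\,\cdot\,\cdot)$, $\o(B\,\cdot\,\cdot)$, $\o(C\,\cdot\,\cdot)$ among $\{D,E,F\}$ are read off from $RST$, $UVW$, $XYZ$, and $\o(DEF)$ is the forced value.

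For the seven realizable configurations I would simply exhibit a holey family attaining each prescribed order, namely the drawings collected in Figure~\ref{fig:tizeset}; for each it then remains only to check by inspection that the sets pairwise intersect, share no triple intersection, that all three containments $D,E,F\in conv(ABC)$ are regular, and that the contact points appear in the claimed order, all of which is routine from the picture together with Corollary~\ref{cor:subset}.

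The substance is the non-realizability of the other three configurations, and here I would argue combinatorially rather than chase pictures. Since by Lemma~\ref{lem:int} every holey family is a \TO, it suffices to show that for each of these three configurations the forced six-element orientation violates the interiority condition on some four-element subset, which already rules out any realization. Concretely I would isolate a quadruple of the form $\{A,B,X,Y\}$ with $X,Y\in\{D,E,F\}$ (and in one case a quadruple $\{A,D,E,F\}$): the orders on two distinct arcs pin down $\o(AXY)$ and $\o(BXY)$ independently of one another, and in the three bad cases these forced signs, together with $\o(ABX)=\o(ABY)=1$, yield a sign pattern admitted by no \TO on four elements. Via Observation~\ref{obs:deabc} and Corollary~\ref{cor:subset} one may replace each set by a representative point, reducing the verification to the finite list of legal four-point orientations. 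The main obstacle is precisely this bookkeeping: one must confirm that each of the three non-drawn configurations does hit such a forbidden quadruple while none of the seven drawn ones does, so that the split into $7+3$ is sharp; the delicate point is that $\o(DEF)$ enters the interiority test on quadruples of the form $\{A,D,E,F\}$, so the determination of $\o(DEF)$ by the previous claim must be invoked carefully, lest it manufacture a spurious extra obstruction among the seven realizable cases.
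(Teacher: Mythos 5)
Your realizability half coincides with the paper's (both just exhibit the drawings in Figure \ref{fig:tizeset} and check them), but your plan for the three non-realizable configurations contains a fatal gap: the four-element interiority violations you want to exhibit do not exist, for \emph{any} of the ten configurations. Every $4$-subset of $\{A,B,C,D,E,F\}$ has one of three forms, and each is automatically a \TO. (i) $\{A,B,C,X\}$: here $\o(ABC)=1$ and $\o(ABX)=\o(BCX)=\o(CAX)=1$, the point-in-triangle pattern, which is a valid \TO. (ii) $\{A,B,X,Y\}$ with $X,Y\in\{D,E,F\}$: the containments force $\o(ABX)=\o(ABY)=1$, and a direct check of all relabelings shows that for \emph{every} choice of signs of $\o(AXY)$ and $\o(BXY)$ the interiority premise either fails or its conclusion holds automatically; indeed all four sign patterns are realizable by four points ($X,Y$ on the same side of the line $AB$), so no forbidden pattern of the kind you describe exists. (iii) $\{A,X,Y,Z\}=\{A,D,E,F\}$: the orientations $\o(ADE),\o(AEF),\o(ADF)$ come from a linear order along the arc of $A$; with $A$ as the distinguished fourth element the interiority premise encodes a cyclic chain of ``comes before'' relations and can never hold, and with $D$ (or $E$, $F$) distinguished the premise contains two ``before'' relations whose transitive consequence is exactly the conclusion — so no violation occurs regardless of the value of $\o(DEF)$. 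Hence interiority on quadruples cannot separate the seven realizable configurations from the three bad ones; this is in line with the paper's broader point that interiority is much weaker than realizability (all \TO's on at most five elements are \CTO's).

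The paper's actual obstructions are geometric, not axiomatic. For the 8th and 9th configurations one chooses \emph{two} triples of connecting segments (the red and blue triples in Figure \ref{fig:tizeset}); each triple consists of pairwise crossing segments, so Corollary \ref{cor:subset} determines $\o(DEF)$ from each, but the two triples force opposite values — the contradiction is with the well-definedness of $\o(DEF)$, i.e., your own setup step ``the configuration determines the full orientation'' is precisely what fails there, and that failure \emph{is} the proof. For the 10th configuration $(DEF\vert DEF\vert DEF)$ the argument is topological: shrinking each of $D,E,F$ to the triangle on representative contact points $P(M,N)$, the segments $P(D,A)P(D,B)$ and $P(D,A)P(D,C)$ are shown to meet different connected components of $E\setminus F$, so $D\cap E$ would be disconnected, which is impossible for convex sets. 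Arguments of this kind (or some other genuinely geometric input) are unavoidable; the abstract route you propose cannot be repaired, because the statement it relies on — that some $4$-subset fails to be a \TO — is false.
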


\begin{proof}
	The first seven drawings in Figure \ref{fig:tizeset} give realizations of the respective configurations.
	For the 8th and 9th configurations, there are two triples of segments (a red and a blue) such that the red segments imply $\o(DEF)=1$ using Corollary \ref{cor:subset}, while the blue segments imply $\o(DEF)=-1$, which gives a contradiction.
	Finally, in the last configuration $(RST|UVW|XYZ)=(DEF|DEF|DEF)$, we will show that we always have a triple intersection.
	For all pairs $M\in\lbrace D,E,F\rbrace$ and $N\in\lbrace A,B,C\rbrace$, choose an arbitrary point $P\left(M,N\right)\in M\cap\partial{N}$.
	We can assume that $M=conv\left(P\left(M,A\right)P\left(M,B\right)P\left(M,C\right)\right)$, as this can only reduce the size of $D\cap E\cap F$. Now as the segment $P(D,A)P(D,B)$ does not cross neither $P(F,C)P(F,B)$, nor $P(E,C)P(E,B)$, the only connected component of $E\setminus F$ it can cross is the one containing $P(E,A)$. Similarly, $P(D,A)P(D,C)$ crosses into the connected component of $E\setminus F$ containing $P(E,C)$. And since $D$ does not intersect $E\cap F$, $D\cap E$ has more than one separate connected components, a contradiction.
\end{proof}

	\begin{figure}
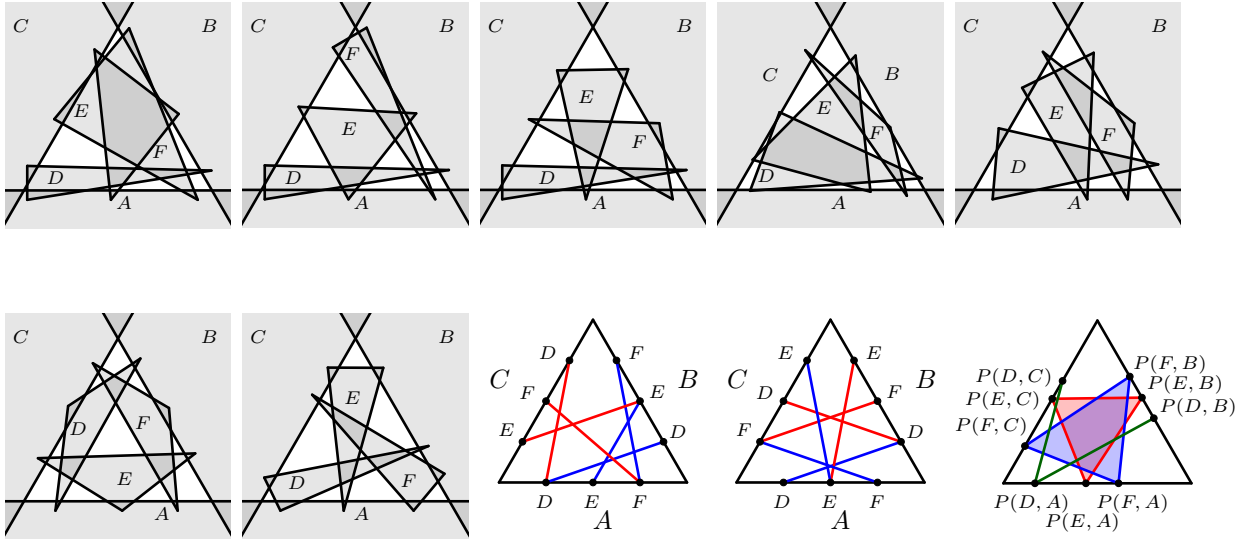

		\begin{center}
			\begin{minipage}{.19\textwidth}
				\centering
				\vspace{-14pt}
				\input{Figures/hathalmaz2}
			\end{minipage}
			\begin{minipage}{.19\textwidth}
				\centering
				\input{Figures/hathalmaz3}
			\end{minipage}
			\begin{minipage}{.19\textwidth}
				\centering
				\input{Figures/hathalmaz4}
			\end{minipage}
			\begin{minipage}{.19\textwidth}
				\centering
				\input{Figures/hathalmaz5}
			\end{minipage}
			\begin{minipage}{.19\textwidth}
				\centering
				\input{Figures/hathalmaz6}
			\end{minipage}
		\end{center}
		
		\begin{center}
			\begin{minipage}{.19\textwidth}
				\centering
				\input{Figures/hathalmaz7}
			\end{minipage}
			\begin{minipage}{.19\textwidth}
				\centering
				\input{Figures/hathalmaz8}
			\end{minipage}
			\begin{minipage}{.19\textwidth}
				\centering
				\input{Figures/hathalmaz9}
			\end{minipage}
			\begin{minipage}{.19\textwidth}
				\centering
				\input{Figures/hathalmaz10}
			\end{minipage}
			\begin{minipage}{.19\textwidth}
				\centering
				\input{Figures/hathalmaz1}
			\end{minipage}
		\end{center}
		\caption{The 10 combinatorially different cases of boundary order.}
		\label{fig:tizeset}
	\end{figure}

\begin{proof}[Proof of Proposition \ref{prop:noreg}.]
 The non-realizablity of Figure \ref{fig:hatponthathalmaz} with regular containments simply follows from checking all possible orderings of $D\cap \partial A, E\cap \partial A, F\cap \partial A, D\cap \partial B, E\cap \partial B,  F\cap \partial B, D\cap \partial C, E\cap \partial C,  F\cap \partial C$. We find that it must correspond to the second figure in Figure \ref{fig:tizeset}, but the orientation of $DEF$ is not correct, hence there is no realization with regular containments. 
\end{proof}

\section{A \pPO that is not a \CPO}\label{app:3int}

The goal of this section is to prove the following theorem.

\begin{thm}\label{thm:5point}
	There is a \pPO that is not a \CPO, i.e., a partial 3-order realizable by five points (not in general position) that is not realizable by intersecting planar convex sets.
\end{thm}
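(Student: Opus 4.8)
The plan is to pin down an explicit five-point set $S=\{P_1,\dots,P_5\}$ that is \emph{not} in general position, carrying one or two collinear triples, to read off its order type $\o$, and to argue that this $\o$ (which is a \pPO by construction, and satisfies the interiority condition for free by Lemma \ref{lem:int}) admits no realization by a pairwise intersecting family of convex sets. Since a point set can be perturbed, the triples of $\o$ equal to $0$ are exactly the collinear triples of $S$; in any convex realization these are precisely the triples whose sets are forced to share a common point, and they are the engine of the argument. I would first record, for each collinear triple, the common point guaranteed by the definition of zero orientation (and by Helly/Lemma \ref{lem:hollow} where several zeros overlap), and also note which point of each collinear triple is the \emph{middle} one, since a betweenness relation is what will drive the contradiction.

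Assume for contradiction that pairwise intersecting convex sets $\mathcal C_1,\dots,\mathcal C_5$ realize $\o$. The key device, exactly as in the proofs of Lemmas \ref{lem:doubleintersecton} and \ref{lem:doubleintersecton2}, is Lemma \ref{lem:hollow}(d): for every non-collinear triple $\{i,j,k\}$ I may pick points $x\in\mathcal C_j\cap\mathcal C_k$, $y\in\mathcal C_i\cap\mathcal C_k$, $z\in\mathcal C_i\cap\mathcal C_j$ and obtain $\o(\mathcal C_i\mathcal C_j\mathcal C_k)=\o(xyz)$. I would choose these representatives so that, whenever a collinear triple supplies a common point, that point is reused as the shared vertex of several of these triangles; by Corollary \ref{cor:subset} I may also shrink each $\mathcal C_\ell$ to the convex hull of its chosen intersection points without disturbing any orientation. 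This collapses all the prescribed orientations into orientation statements about a handful of auxiliary points lying inside the $\mathcal C_\ell$, together with the essential \emph{convexity} constraints that ordinary point order types do not see: since $conv$ of any points chosen inside one $\mathcal C_\ell$ lies in $\mathcal C_\ell$, a common point lying outside $\mathcal C_\ell$ must lie outside the triangle they span.

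The contradiction should then be read off as follows. A collinear triple with a middle point forces, for the sets off that line, two of the derived triangle orientations to be \emph{opposite}; simultaneously, the ``outside the triangle'' constraints produced by the common points force a sign pattern that cannot coexist with all betweenness relations holding at once, or else force two sets whose pairwise intersection is nonempty to acquire a common point with a third, i.e.\ a spurious triple intersection, contradicting that the realization reproduces the prescribed nonzero orientation. I expect the main obstacle to be that the containments $\mathcal C_\ell\in conv(\mathcal C_i\mathcal C_j\mathcal C_k)$ need not be \emph{regular}: an irregular containment lets a single set meet $\partial\hollow(\mathcal C_i\mathcal C_j\mathcal C_k)$ in several arcs and thereby weakens the orientation bookkeeping above. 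These cases must be excluded separately, using the Claim proved earlier that a containment cannot be irregular with respect to two of the three sets simultaneously, together with a direct analysis of how the arcs of $\partial\hollow$ can be arranged. Choosing the five points so that \emph{every} branch of this regular/irregular dichotomy terminates in a contradiction, while the configuration itself stays honestly realizable by points, is the technical heart of the argument.
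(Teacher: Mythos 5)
There is a genuine gap: your text is a plan, not a proof, and the two items you yourself defer --- which five-point configuration to take, and why every branch of the case analysis ``terminates in a contradiction'' --- are exactly the content of the theorem. The paper's proof uses the four vertices of a square together with the intersection point $D$ of its diagonals, so that the two zero triples $A_1A_3D$ and $A_2A_4D$ share the middle element $D$, and the contradiction is obtained not from orientation bookkeeping but from a topological cone argument: fixing $x\in A_2\cap A_4\cap D$, it uses the alternation of the connected components of $A_2\setminus A_4$ and $A_4\setminus A_2$ around $A_2\cap A_4$ (Observation \ref{obs:order}) together with a hollow-vertex construction (Observation \ref{obs:vertex}, applied to the four triples $A_4A_1D$, $A_2A_1D$, $A_2A_3D$, $A_4A_3D$) to produce points $t_1,t_2,t_3,t_4$ such that $A_1\cap D$ lies in the convex cone at $x$ from ray $xt_1$ to ray $xt_2$, $A_3\cap D$ lies in the cone from $xt_3$ to $xt_4$, and these two cones meet only at $x$; hence $A_1\cap A_3\cap D=\emptyset$, contradicting $\o(A_1A_3D)=0$. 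Nothing in your proposal plays the role of this step.

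Moreover, the mechanism you do describe cannot succeed as stated. Collapsing a hypothetical realization to representative points via Lemma \ref{lem:hollow}(d) and Corollary \ref{cor:subset} and then looking for an inconsistent pattern of orientations and betweenness is hopeless on its own: the configuration is a \pPO, i.e.\ realizable by points, so every system of orientation/betweenness constraints extractable in this way is satisfiable --- the point realization satisfies it. And once you add the genuinely convex constraints (common points of zero triples, empty triple intersections for nonzero triples, hulls of chosen points contained in the sets), satisfiability of the resulting finite system becomes \emph{equivalent} to the existence of a convex realization by polygons (take each $\mathcal C_\ell$ to be the convex hull of its chosen points and apply Lemma \ref{lem:hollow}(d) again); so you have reformulated the theorem, not proved it. The unsatisfiability must be established by a planar/topological argument of the paper's kind. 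Finally, the doubly-irregular-containment claim you invoke is used in the paper only towards Proposition \ref{prop:noreg} (a six-element statement about \CTO{}s); it is neither needed in, nor a substitute for, the proof of Theorem \ref{thm:5point}.
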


The configuration consists of the four points in convex position and the intersection point of the diagonals of their convex hull (see Figure \ref{fig:squarecenter}).

\begin{figure}[!ht]
	\centering
	\begin{tikzpicture}[line cap=round,line join=round,>=triangle 45,x=1.0cm,y=1.0cm]
	\clip(3.4313913628190748,0.6958986342155006) rectangle (5.746334207928435,2.15545872957324);
	\draw [line width=1.pt,color=gray] (4,1)-- (5,2);
	\draw [line width=1.pt,color=gray] (4,2)-- (5,1);
	\draw (3.4,2.1) node[anchor=north west] {$A_1$};
	\draw (5.0,2.1) node[anchor=north west] {$A_4$};
	\draw (3.4,1.2) node[anchor=north west] {$A_2$};
	\draw (5.0,1.2) node[anchor=north west] {$A_3$};
	\draw (4.55,1.77) node[anchor=north west] {$D$};
	\draw [fill=black] (4.,1.) circle (1.0pt);
	\draw [fill=black] (5.,1.) circle (1.0pt);
	\draw [fill=black] (5.,2.) circle (1.0pt);
	\draw [fill=black] (4.,2.) circle (1.0pt);
	\draw [fill=black] (4.5,1.5) circle (1.0pt);
	\end{tikzpicture}
	\caption{A \pPO that is not a \CPO.}
	\label{fig:squarecenter}
\end{figure}

 \begin{proof}
 We start with two simple observations.  
 
 \begin{obs}\label{obs:vertex}
If $\o(ABC)=1$, $x\notin B$ and $x\in A\cap C$, then we can find the vertex of $\hollow(ABC)$ which is not in $C$ as follows. Take the connected component of $A\setminus B$ that contains $x$ and consider its boundary. It consists of two continuous parts, one belonging to $A$ and one belonging to $B$. The counterclockwise last point of the $B$ part will be the desired vertex of $\hollow(ABC)$. Similarly, if $\o(ABC)=-1$, then we find a vertex going clockwise. 
\end{obs}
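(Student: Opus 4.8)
The plan is to read off the desired vertex directly from the structure of the hollow given by Lemma~\ref{lem:hollow}, and to use the hypotheses $\o(ABC)=1$, $x\in A\cap C$, $x\notin B$ only to (i) place $x$ in the correct component of $A\setminus B$ and (ii) fix the traversal direction. Write $\gamma_A,\gamma_B,\gamma_C$ for the three boundary arcs of $\hollow(ABC)$ lying on $\partial A,\partial B,\partial C$ (Lemma~\ref{lem:hollow}(b)), and let $v_{AB}=\gamma_A\cap\gamma_B$, $v_{BC}=\gamma_B\cap\gamma_C$, $v_{CA}=\gamma_C\cap\gamma_A$ be the three vertices of the hollow; these are the corners of the triangle of Lemma~\ref{lem:hollow}(c), so $v_{AB}\in A\cap B$, $v_{BC}\in B\cap C$, $v_{CA}\in C\cap A$, and $v_{AB}$ is exactly the vertex not in $C$. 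The whole task is to show that $v_{AB}$ is the counterclockwise-last point of the $B$-part of the boundary of the component of $A\setminus B$ containing $x$.

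First I would pin down the shape of $A\setminus B$. Since the triple is holey, $A\cap B\cap C=\emptyset$ while $A\cap B\neq\emptyset$; in particular neither of $A,B$ contains the other, as $B\subseteq A$ would force $B\cap C\subseteq A\cap B\cap C=\emptyset$, contradicting that the family is intersecting. Because $A\cap B$ is convex and meets $\partial A$ in finitely many arcs, removing the convex set $A\cap B$ from the convex set $A$ yields components each bounded by exactly one arc of $\partial A$ (a maximal subarc of $\partial A$ lying outside $B$) and exactly one arc of $\partial B$ (the corresponding ``cut''), the two arcs sharing their endpoints, which are points of $\partial A\cap\partial B$. This is the asserted ``two continuous parts'' structure, and it rules out the annulus case precisely because $B\not\subseteq A$. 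Here I would assume, as is standard and as the paper does elsewhere, that $\partial A$ and $\partial B$ cross transversally, perturbing if necessary.

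Next I would locate $x$. The key point is that $A\cap C$ is convex and, by the no-triple-intersection property, disjoint from $B$; hence $A\cap C$ lies entirely in a single component $K$ of $A\setminus B$. Since both $x$ and the hollow vertex $v_{CA}$ lie in $A\cap C$, they lie in this same $K$. As $v_{CA}\notin B$, it lies in the interior of the $\partial A$-arc $\alpha$ of $\partial K$, and the hollow arc $\gamma_A$ (which runs from $v_{CA}$ to $v_{AB}$ and avoids $B$) is contained in $\alpha$; therefore $v_{AB}$ is the shared endpoint of $\alpha$ with the $B$-arc $\beta$ of $\partial K$, i.e.\ one of the two endpoints of the $B$-part.

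Finally I would fix the direction. Traversing $\partial K$ counterclockwise, the orientation agrees with the counterclockwise orientation of $\partial A$ along $\gamma_A$, because $K$ locally coincides with the interior of $A$ there ($\gamma_A$ avoids $B$). Since the hollow lies outside $A$, it sits on the right of $\partial A$ traversed counterclockwise; comparing this with the counterclockwise vertex order $v_{CA},v_{AB},v_{BC}$ of the triangle forced by $\o(ABC)=1$ (Lemma~\ref{lem:hollow}(c,d)), one gets that $\gamma_A$, and hence $\alpha$, is traversed from $v_{AB}$ towards $v_{CA}$ and on to the other endpoint of $\alpha$. Consequently the subsequent $B$-arc $\beta$ is traversed so as to return to $v_{AB}$, making $v_{AB}$ the counterclockwise-last point of $\beta$, as claimed; the case $\o(ABC)=-1$ is identical with all orientations reversed, which is why one then goes clockwise. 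I expect the main obstacle to be exactly this last bookkeeping step: correctly matching the counterclockwise orientation of $\partial K$ with that of $\partial A$ and translating the sign $\o(ABC)=1$ into ``which of the two $\partial A\cap\partial B$ endpoints is last,'' where an orientation slip would swap $v_{AB}$ with the other (spurious) crossing endpoint of $\beta$.
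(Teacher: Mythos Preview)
Your argument is correct and follows the only natural route: locate the right component of $A\setminus B$ via the convex, $B$-disjoint set $A\cap C$, then use the hollow structure from Lemma~\ref{lem:hollow} to identify $v_{AB}$ as an endpoint of the $\partial B$-arc, and finally use $\o(ABC)=1$ to pick the correct endpoint. The paper itself treats this statement as a self-evident observation accompanied only by Figure~\ref{fig:vertex} and gives no proof, so you have supplied strictly more detail than the original; in particular your orientation bookkeeping (matching the counterclockwise traversal of $\partial K$ along $\alpha$ with that of $\partial A$, and comparing against the $v_{CA}\to v_{AB}$ direction on the hollow boundary) is exactly the point the paper leaves to the reader.
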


\begin{figure}[!h]
    \centering
    \begin{tikzpicture}[line cap=round,line join=round,>=triangle 45,x=1.0cm,y=1.0cm]
\clip(-0.07602749121724942,1.6450743804931767) rectangle (3.762330664844489,5.292785608273691);
\draw [rotate around={-89.63272439266275:(2.0706736018761926,3.4667351285751637)},line width=1.pt] (2.0706736018761926,3.4667351285751637) ellipse (1.6676648912567558cm and 0.5894117317549916cm);
\draw [rotate around={0.:(1.9872902047812546,4.025419590437493)},line width=1.pt] (1.9872902047812546,4.025419590437493) ellipse (1.6102465949000657cm and 0.585571598002575cm);
\draw [line width=1.pt] (2.3388336003447656,2.293273936649295)-- (1.0214745148399194,1.959313838741235);
\draw [line width=1.pt] (0.05107046097022513,4.390390147742623)-- (1.0214745148399194,1.959313838741235);
\draw [line width=1.pt] (2.3388336003447656,2.293273936649295)-- (0.05107046097022513,4.390390147742623);
\draw [line width=1.pt,dash pattern=on 3pt off 3pt] (1.4812364886006877,3.4695169544879247)-- (1.8304417915948679,2.5220502505867493);
\begin{scriptsize}

\draw (1.9,5.0) node[anchor=north west] {$A$};
\draw (2.9,4.3) node[anchor=north west] {$B$};
\draw (1.75,2.6) node[anchor=north west] {$x$};
\draw (0.9,2.9) node[anchor=north west] {$C$};
\end{scriptsize}
\end{tikzpicture}\caption{Finding a vertex of a hollow.}
    \label{fig:vertex}
\end{figure}
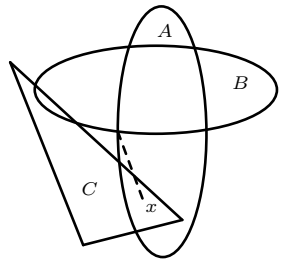

\begin{obs}\label{obs:order}
 Suppose $A$ and $B$ are intersecting convex sets. Let the connected components of $A\setminus B$ be $A^1,\dots, A^k$ and let the connected components of $B\setminus A$ be $B^1,\dots, B^k$, numbered such that $A^i$ is followed by $B^i$ and then by $A^{i+1}$ around $A\cap B$. If $x\in A\cap B$, $a_i\in A^i$ and $b_i\in B^i$, then the order of the $xa_i$ and $xb_i$ rays around $x$ is the same as the order of the $A^i$-s and $B^i$-s around $A\cap B$.
\end{obs}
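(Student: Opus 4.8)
The plan is to reduce the statement to a single \emph{crossing} claim, combined with the standard fact that a point in the interior of a convex body sees its boundary in the correct cyclic order. Throughout I may assume $x\in int(A\cap B)$: this is the case in which ``the order of the rays around $x$'' is a genuine cyclic order, and it is the situation in which the observation is applied; the degenerate case $x\in\partial(A\cap B)$ can be recovered by approximating $x$ with interior points. I also assume $A\cap B$ has nonempty interior, since otherwise the decomposition into components is degenerate and the statement is vacuous.

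First I would record the structure of $\partial(A\cap B)$. The points of $\partial A\cap\partial B$ cut $\partial(A\cap B)$ into arcs that strictly alternate between arcs carried by $\partial B$ (lying in $int(A)$) and arcs carried by $\partial A$ (lying in $int(B)$); by the indexing hypothesis there are $k$ of each type. Crossing a $\partial B$-arc outward from $A\cap B$ leaves $B$ but stays in $A$, so each $\partial B$-arc borders exactly one component $A^i$; symmetrically each $\partial A$-arc borders exactly one $B^i$. Hence the bordering arcs appear along $\partial(A\cap B)$ in precisely the cyclic order $A^1,B^1,A^2,B^2,\dots,A^k,B^k$ posited in the hypothesis.

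The heart of the argument is the claim that for $p\in A^i$ the ray $xp$ meets $\partial(A\cap B)$ on the $\partial B$-arc bordering $A^i$. Indeed, since $x,p\in A$ and $A$ is convex, $[x,p]\subseteq A$; and since $x\in int(B)$, $p\notin B$ and $B$ is convex, the set $\{t\in[0,1]:x+t(p-x)\in B\}$ is a closed interval $[0,t_0]$ with $t_0<1$, so $q:=x+t_0(p-x)\in\partial B\cap A$ while the sub-segment $(q,p]$ lies in $A\setminus B$. Being connected and containing $p$, this sub-segment lies in the component $A^i$, so $q$ lies on the $\partial B$-arc bordering $A^i$ (for a generic representative $a_i$ the point $q$ lands in the relative interior of that arc, away from the corners $\partial A\cap\partial B$). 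As $q$ lies on the ray $xp$, the direction of $xa_i$ equals the direction from $x$ to a point of the $A^i$-arc; symmetrically the direction of $xb_i$ equals the direction to a point of the $B^i$-arc.

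To conclude I would invoke the standard fact that, for $x\in int(A\cap B)$ with $A\cap B$ convex, sending each boundary point $q$ to the direction of the ray $xq$ is a cyclic-order-preserving homeomorphism $\partial(A\cap B)\to S^1$: as one traverses $\partial(A\cap B)$ counterclockwise, the direction to the boundary rotates monotonically through a full turn. Since distinct bordering arcs share at most corner points, their direction-images occupy pairwise non-overlapping angular sectors arranged around $x$ in the same cyclic order as the arcs around $\partial(A\cap B)$; together with the crossing claim this shows that $xa_1,xb_1,\dots,xa_k,xb_k$ occur around $x$ in the order $A^1,B^1,\dots,A^k,B^k$, as required. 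The only genuinely delicate point is the crossing claim — in particular guaranteeing that the exit point $q$ lands on the arc bordering the correct component rather than at a corner intersection point — and this is exactly where convexity of both $A$ and $B$ (giving a single exit point and a connected tail segment) and a generic choice of the $a_i,b_i$ are used.
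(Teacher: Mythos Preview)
The paper states this as an observation without proof; it is treated as self-evident and left to the reader. Your argument is correct and supplies exactly the details one would fill in: the segment $[x,a_i]$ stays in $A$ by convexity, exits $B$ at a single point $q$ on the $\partial B$-arc bordering $A^i$ (because the tail $(q,a_i]$ is a connected subset of $A\setminus B$ containing $a_i$), and then the star-shapedness of $A\cap B$ about the interior point $x$ gives the order-preserving bijection between boundary points and directions. Your handling of the boundary case $x\in\partial(A\cap B)$ by approximation, and of the corner points by genericity of the $a_i,b_i$, is adequate for the purpose. There is nothing to compare against in the paper beyond the bare statement, so your write-up simply makes explicit what the authors regarded as routine.
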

 We will arrive at a contradiction by showing that $A_1\cap D$ and $A_3\cap D$ are disjoint. Consider first how $A_2$ and $A_4$ lie in the plane. Since they are convex, the connected components of $A_2\setminus A_4$ and $A_4\setminus A_2$ alternate around $A_2\cap A_4$. Let us call these components $A_2^1, \dots,  A_2^k$ for $A_2$ and  $A_4^1, \dots, A_4^k$ for $A_4$ so that $A_2^1$ is followed by $A_4^1$ counterclockwise (see Figure \ref{fig:5pointproof}). Since $\o(A_1A_2A_4)=1$, there is an $i_1$ such that $A_1$ intersects $A_4^{i_1}$ and $A_2^{i_1+1}$ but no other $A_2^j$ or $A_4^j$.  Since $\o(A_3A_2A_4)=-1$, there is an $i_3$ such that $A_3$ intersects $A_2^{i_3}$ and $A_4^{i_3}$ but no other $A_2^j$ or $A_4^j$. This implies that $A_1$ and $A_3$ connect different pairs of the $A_2^j$ or $A_4^j$ sets (though one member of the two pairs might be the same). 

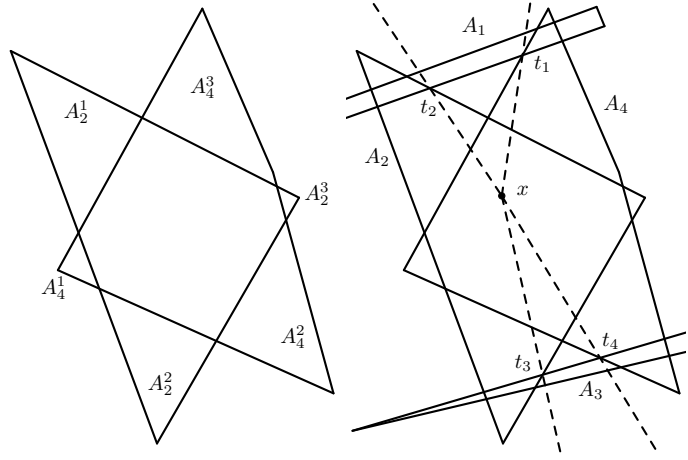
\begin{figure}[!h]
    \centering
   \scalebox{0.8}{
    \begin{tikzpicture}[line cap=round,line join=round,>=triangle 45,x=1.0cm,y=1.0cm]
\clip(2.974397834183812,-0.7589292007818002) rectangle (8.606636592342204,6.729204850179495);
\draw [line width=1.pt] (3.143898791403217,5.925014783532181)-- (5.574386801921873,-0.6031554926542864);
\draw [line width=1.pt] (5.574386801921873,-0.6031554926542864)-- (7.934571440235445,3.4844834341270867);
\draw [line width=1.pt] (7.934571440235445,3.4844834341270867)-- (3.143898791403217,5.925014783532181);
\draw [line width=1.pt] (3.927279224545594,2.2792827677542005)-- (6.327637218404928,6.628048505583032);
\draw [line width=1.pt] (6.327637218404928,6.628048505583032)-- (7.5027078681184936,3.906303667357597);
\draw [line width=1.pt] (7.5027078681184936,3.906303667357597)-- (8.507041756762566,0.23044163492029343);
\draw [line width=1.pt] (8.507041756762566,0.23044163492029343)-- (3.927279224545594,2.2792827677542005);
\draw (3.9,5.228350395999753) node[anchor=north west] {$A_2^1$};
\draw (3.5230973120559783,2.3) node[anchor=north west] {$A_4^1$};
\draw (5.3,0.7) node[anchor=north west] {$A_2^2$};
\draw (7.5,1.4842833705191045) node[anchor=north west] {$A_4^2$};
\draw (7.9,3.856601701319343) node[anchor=north west] {$A_2^3$};
\draw (6.0,5.631805894435168) node[anchor=north west] {$A_4^3$};
\end{tikzpicture}
    \begin{tikzpicture}[line cap=round,line join=round,>=triangle 45,x=1.0cm,y=1.0cm]
\clip(2.974397834183812,-0.7589292007818002) rectangle (8.606636592342204,6.729204850179495);
\draw [line width=1.pt] (3.143898791403217,5.925014783532181)-- (5.574386801921873,-0.6031554926542864);
\draw [line width=1.pt] (5.574386801921873,-0.6031554926542864)-- (7.934571440235445,3.4844834341270867);
\draw [line width=1.pt] (7.934571440235445,3.4844834341270867)-- (3.143898791403217,5.925014783532181);
\draw [line width=1.pt] (3.927279224545594,2.2792827677542005)-- (6.327637218404928,6.628048505583032);
\draw [line width=1.pt] (6.327637218404928,6.628048505583032)-- (7.5027078681184936,3.906303667357597);
\draw [line width=1.pt] (7.5027078681184936,3.906303667357597)-- (8.507041756762566,0.23044163492029343);
\draw [line width=1.pt] (8.507041756762566,0.23044163492029343)-- (3.927279224545594,2.2792827677542005);
\draw (3.15,4.421439399128923) node[anchor=north west] {$A_2$};
\draw (7.121920358099882,5.309041495686835) node[anchor=north west] {$A_4$};
\draw [line width=1.pt] (2.6414609280147197,5.0099247488156635)-- (7.129812755939393,6.657886763247708);
\draw [line width=1.pt] (7.129812755939393,6.657886763247708)-- (7.261667734843916,6.336791677876251);
\draw [line width=1.pt] (7.261667734843916,6.336791677876251)-- (2.55773865072346,4.66495062090781);
\draw [line width=1.pt] (2.55773865072346,4.66495062090781)-- (2.6414609280147197,5.0099247488156635);
\draw (4.765740247237058,6.6) node[anchor=north west] {$A_1$};
\draw (5.68,3.8) node[anchor=north west] {$x$};
\draw [line width=1.pt,dash pattern=on 4pt off 4pt,domain=5.554300124148992:10.60777586458187] plot(\x,{(-11.772671360348046--2.338111682172511*\x)/0.3453872419299264});
\draw [line width=1.pt,dash pattern=on 4pt off 4pt,domain=2.022242857876233:5.554300124148992] plot(\x,{(-14.144632996980802--1.7908825240626056*\x)/-1.1943088564417792});
\draw [line width=1.pt] (3.073595419198132,-0.3922453760390312)-- (9.069468734403248,1.3753822679745356);
\draw [line width=1.pt] (9.069468734403248,1.3753822679745356)-- (9.270335512132062,1.074082101381314);
\draw [line width=1.pt] (9.270335512132062,1.074082101381314)-- (3.073595419198132,-0.3922453760390312);
\draw [line width=1.pt,dash pattern=on 4pt off 4pt,domain=5.554300124148992:10.60777586458187] plot(\x,{(--18.914044889960035-2.975101746637195*\x)/0.6798576636422231});
\draw [line width=1.pt,dash pattern=on 4pt off 4pt,domain=5.554300124148992:10.60777586458187] plot(\x,{(--20.704128523880787-2.693970416105672*\x)/1.6334679155019938});
\draw (6.70232663972705,0.5482666141489424) node[anchor=north west] {$A_3$};
\draw (5.97,5.97) node[anchor=north west] {$t_1$};
\draw (4.08,5.24) node[anchor=north west] {$t_2$};
\draw (7.1,1.3551776110197717) node[anchor=north west] {$t_4$};
\draw (5.65,1.00) node[anchor=north west] {$t_3$};
\begin{scriptsize}
\draw [fill=black] (5.554300124148992,3.514613450786409) circle (1.5pt);
\end{scriptsize}
\end{tikzpicture}
        }
    \caption{Proof of Theorem \ref{thm:5point}.}
    \label{fig:5pointproof}
\end{figure}

Fix a point $x$ such that $x\in A_2\cap A_4\cap D$. We apply Observation \ref{obs:vertex} four times. First, for $A_4,A_1,D$ and $x$, which gives us the vertex $t_1$ of $\hollow(A_1,A_4,D)$ 
that is not contained in $D$. Hence, the ray $xt_1$ first runs in $D$, then it leaves it, then it enters $A_1$ at $t_1$. This means that $D$ cannot intersect $A_1$ on both sides of $xt_1$, because then it would contain $t_1$. 

Similarly, define $t_2$ for the sets $A_2,A_1,D$. Since $D$ is convex and it intersects $A_1$ on only one side of the ray $xt_2$, the only possibility is that $A_1\cap D$ lies in the counterclockwise cone bounded first by the ray $xt_1$ and then ended at $xt_2$. 

Apply the same reasoning for $A_2,A_3,D$ and $A_4,A_3,D$, giving us $t_3$ and $t_4$ (see Figure \ref{fig:5pointproof}). $A_3\cap D$ lies in the counterclockwise cone bounded first by the ray $xt_3$ and then ended at $xt_4$.  We show that the two cones are disjoint apart from $x$. Therefore, we have $D\cap A_1\cap A_3=\emptyset$, a contradiction. 

To see that the cones are disjoint, note that $A_1$ and $A_3$ connect different pairs of the $A_2^j$ or $A_4^j$ sets, but they connect neighbouring ones. Consider first the case when $t_1, t_2, t_3$ and $t_4$ are in different $A_i^j$-s. Using Observation \ref{obs:order} we can see that $xt_1, xt_2, xt_3, xt_4$ comes in this order around $x$ as $t_1$ is surely followed by $t_2$ and $t_3$ is followed by $t_4$.

Hence, the only two cases that we have to consider is when $t_1$ and $t_4$ are in the same $A_4^j$, or when $t_2$ and $t_3$ are in the same $A_2^j$. By symmetry, it is enough to consider the first case, so suppose $t_1$ and $t_4$ lies in $A_4^j$. Pick $q_1\in A_2^{j}\cap A_3$ and $q_2\in A_2^{j+1}\cap A_1$. Since $t_1$ was the last point counterclockwise, $q_2t_1$ does not enter the interior of $A_4$. Since $t_4$ was the last point clockwise, $q_1t_4$ does not enter the interior of $A_4$. That is, $t_1$ can be seen on the boundary of $A_4^{j}$ from $A_2^{j+1}$ and $t_4$ can be seen from $A_2^{j}$. But this implies that they are in the right order, $t_4$ comes first on the boundary of $A_4^j$ going counterclockwise.
\end{proof}

\section{Towards $(p,q)$-theorems}\label{app:43}

Holmsen and Lee \cite{H,HL} (see also \cite{AKMM}) have shown that practically the implication of the $d$-dimensional colorful Helly theorem \cite{colHelly} alone is sufficient to obtain $(p,d+1)$-theorems \cite{AK} for any set system, i.e., to show that if from any $p$ sets there are $d+1$ with a common point, then there are $C(p,d+1)$ points that stab every set.
The constant their general method obtains, however, is quite weak.
Our motivation in this section is to try to improve this constant using properties of \CPO's.
We focus on the simplest open case for planar convex sets, (4,3), so we assume that from any 4 sets at least 3 intersect.
For this special case, better bounds are known but the current best upper bound, $9$ \cite{McG} is far from the conjectured $3$ \cite{KGyT}.

A possible way to characterize the intersections occurring in a family of sets is to represent the sets by the vertices of a hypergraph in which those tuples that do not intersect form a hyperedge. Note that the complement of this hypergraph (where the intersecting tuples form a hyperedge) is possibly more natural for some purposes (for example, the nerves mentioned in Section \ref{sec:introduction} are analogous to this version), but for characterizing \CPO's, we found this version better.

For convex sets in $\mathbb{R}^d$, only the edges on at most $d+1$ vertices contain information.
Indeed, by Helly's theorem, a collection $S$ of more than $d+1$ vertices forms a hyperedge if and only if at least one $(d+1)$-tuple from $S$ forms a hyperedge. We consider the case when the planar convex sets are also pairwise intersecting, i.e., no pairs of vertices form a hyperedge---at the cost of $1$ extra stabbing point the original $(4,3)$ problem reduces to this case, see \cite{KGyT}---so all the information about the intersection structure of the family is described by a $3$-uniform hypergraph.

We defined orientations in order to get extra information on the intersection structure of such families: in the specific case, we can indicate this extra information by defining an orientation on the ordered triples forming a hyperedge that coincides with the corresponding orientation within the \CPO. But also in general, we may regard any P$k$O as a $k$-uniform hypergraph in which hyperedges are oriented according to the orientation of the particular $k$-tuple. We will sometimes use this terminology and also refer to a $k$-tuple in a P$k$O whose orientation is $0$ as a non-edge, and to a subset of vertices in which all $k$-tuples form a non-edge as an independent set. 
A $k$-uniform hypergraph in which all ordered $k$-tuples are assigned a value of $\pm1$ in a way that this sign assignment satisfies the orientation property (but not necessarily the interiority condition), will be called an \emph{oriented $k$-uniform hypergraph}.

\smallskip

First, we will summarize a few results about the structure of oriented hypergraphs for lower-dimensional or otherwise simpler structures, before considering the \CPO case.

Recall that a P2O is a poset, thus it is always a comparability graph with some orientation.
Moreover, the edges of any comparability graph can be directed (oriented) to get a P2O.

A C-P2O is defined by one-dimensional convex sets, i.e., by intervals on a line: a pair of intervals has orientation $0$ if they intersect and otherwise their orientation depends on which side of each other they lie on.
It is easy to see that this way we indeed obtain a poset, i.e., a P2O. The graphs representing the intersection structure of intervals on the line are called interval graphs, which are known to be equivalent to the class of induced $C_4$-free co-comparability graphs \cite{GH}. In other words, exactly those graphs are interval graphs, for which there is an orientation of the non-edges of the interval graph that gives a poset, while the interval graph is required to satisfy the colorful Helly theorem \cite{colHelly}, which says in one dimension that there is no induced $K_{2,2}$ in the interval graph. Thus, a graph (2-uniform hypergraph) can be directed (oriented) to get a C-2PO if and only if it is a comparability graph that does not contain two disjoint edges as an induced subgraph.

Mirsky's theorem \cite{Mirsky} (the dual of Dilworth's theorem \cite{Dilworth} for posets) states that a poset with maximal chain length $p-1$ can be partitioned into $p-1$ antichains, which in our terminology means that in a comparability graph in which the size of the maximal clique has size $p-1$, there are at most $p-1$ independent sets.
In the interval language this means that if in a family of intervals on a line there are at most $p-1$ pairwise non-intersecting intervals, then the family can be divided into $p-1$ subfamilies whose members are pairwise intersecting and thus, because of Helly's theorem, the family can be stabbed by $p-1$ points.
Therefore, Mirsky's theorem gives us a $(p,2)$-theorem.

The situation is different for P3O's: all $3$-uniform hypergraphs can be oriented to obtain a P3O. 
Just take an arbitrary ordering $v_1,\ldots,v_n$ of the vertices and orient the triples forming a hyperedge in a way that the increasing triples according to this ordering get an orientation of $+1$, i.e., $\o(v_iv_jv_k)=1$ if and only if $v_iv_jv_k$ is a hyperedge and the sign of the permutation $(i,j,k)$ is positive.
For such an orientation, the premise of the interiority condition cannot hold at all, and thus the condition is vacuously true, so the orientation is indeed a P3O.\\

In the rest of this section, we study \CPO's.

Similarly to the one-dimensional case, on the plane, the colorful Helly theorem provides a forbidden induced subhypergraph family for the 3-uniform hypergraph of triple non-intersections of convex sets.
It is forbidden to have an induced subhypergraph whose vertices are partitioned into 3 groups, $V_1, V_2, V_3$, such that for every $v_1\in V_1, v_2\in V_2, v_3\in V_3$, $v_1v_2v_3$ is not a hyperedge, but $\binom{V_i}3$ contains a hyperedge for all $i$.
Forbidding these, however, cannot be a sufficient characterization for \CPO, as any 3-uniform hypergraph can be oriented to obtain a \PO as we have already seen.

Below we prove some more rules that every \CPO needs to satisfy.
For example, for four pairwise intersecting convex sets it can be easily checked that the following rule is enough to exclude those \PO's that satisfy the interiority condition but cannot be represented by convex sets.

\begin{lem}\label{lem:doubleintersection}
	If $\o(ABC)=\o(ABD)=0$ but $\o(ACD)\ne 0$ and
	$\o(BCD)\ne 0$, then $\o(ACD)=\o(BCD)$ for any four pairwise intersecting planar convex sets.
\end{lem}

In fact, Lemma \ref{lem:doubleintersection} can be strengthened as follows.
(We get back Lemma \ref{lem:doubleintersection} when $\mathcal{A}=\lbrace A,D\rbrace$, $\mathcal{B}=\lbrace B\rbrace$, $\mathcal{C}=\lbrace C\rbrace$.)

\begin{lem}\label{lem:doubleintersection3}
    If $\mathcal A$, $\mathcal B$, $\mathcal C$ are three families of planar convex sets each such that  $\mathcal A\cup\mathcal B\cup\mathcal C$ is pairwise intersecting and any two-colored triple is intersecting (i.e., $\o(A_1A_2B)=0$ for all $A_1,A_2\in \mathcal A$, $B\in \mathcal B$, and similarly for the other cases of picking two sets from one family, and a third set from another), then the orientation of any non-intersecting colorful triple is the same (i.e., $\o(ABC)\ge 0$ or $\o(ABC)\le 0$ for all $A\in \mathcal A$, $B\in \mathcal B$, $C\in \mathcal C$).
\end{lem}

\begin{proof}
    To prove the statement, take two non-intersecting colorful triples, i.e., $A,A'\in\mathcal{A}$, $B,B'\in\mathcal{B}$ and $C,C'\in\mathcal{C}$ for which $\o(ABC)\neq0\neq\o(A'B'C')$.
    Because the two-colored triples are intersecting, there exist points $x\in B\cap B'\cap C\cap C'$, $y\in A\cap A'\cap C\cap C'$, $z\in A\cap A'\cap B\cap B'$ by Helly's theorem (if some of $A$, $A'$, $B$, $B'$, $C$ and $C'$ are the same, we do not even need Helly's theorem).
    By Lemma \ref{lem:hollow}(d) we have the following equality for the orientations: $\o(ABC)=\o(xyz)=\o(A'B'C')$, which is exactly what we wanted to prove.
\end{proof}

Say that an oriented $3$-uniform hypergraph $H$ (as a special case, a \PO) satisfies the $(4,3)$ property if $H$ contains no $K_4^{(3)}$ (with any orientation of the hyperedges).
Lemma \ref{lem:int'} states that a \PO that has the $(4,3)$ property cannot satisfy the premise of the interiority condition. 

\begin{lem}\label{lem:int'}
	$\o(ABD)=\o(BCD)=\o(CAD)=1$ cannot hold for any $A,B,C,D$ elements of a \PO that satisfies the $(4,3)$ property.
\end{lem}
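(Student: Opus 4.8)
The plan is to argue by contradiction, turning the interiority condition against its own premise and then clashing with the $(4,3)$ property. First I would suppose, for contradiction, that there are vertices $A,B,C,D$ of $H$ with $\o(ABD)=\o(BCD)=\o(CAD)=1$, and work entirely within the four-vertex set $\{A,B,C,D\}$.

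The central step is to apply the interiority condition to exactly this premise. Because the orientation of $H$ is assumed to satisfy the interiority condition and the hypotheses $\o(ABD)=\o(BCD)=\o(CAD)=1$ are precisely its premise, the condition yields $\o(ABC)=1$; in particular $\o(ABC)\neq 0$.

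It then remains to check that the fourth triple on these vertices is non-zero as well. Using the antisymmetry relation $\o(ACD)=-\o(CAD)$ (a single transposition reverses orientation), the assumption $\o(CAD)=1$ gives $\o(ACD)=-1\neq 0$. Hence all four triples $\o(ABC),\o(ABD),\o(ACD),\o(BCD)$ spanned by $A,B,C,D$ are non-zero, so none of them is a $0$-edge.

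Finally I would read off the contradiction: four vertices every one of whose triples is non-zero form a $K_4^{(3)}$ containing no $0$-edge, which the $(4,3)$ property (equivalently, the assumption that $H$ has no $K_4^{(3)}$ without a $0$-edge) forbids. I do not anticipate a genuine obstacle, since each step is an immediate consequence of the stated hypotheses; the only place demanding care is the orientation bookkeeping, i.e.\ confirming through the antisymmetry relations that $\o(ACD)\neq 0$, so that the four-element set really is free of $0$-edges and the $(4,3)$ hypothesis is actually violated.
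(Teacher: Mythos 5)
Your proof is correct and follows essentially the same route as the paper's: apply the interiority condition to the premise $\o(ABD)=\o(BCD)=\o(CAD)=1$ to deduce $\o(ABC)=1$, so that all four triples on $\{A,B,C,D\}$ are non-zero, contradicting the $(4,3)$ property. The only (harmless) additions are your explicit antisymmetry check that $\o(ACD)=-\o(CAD)\neq 0$, which the paper leaves implicit, and your invoking the interiority condition as a hypothesis on $H$ rather than citing Lemma~\ref{lem:int}.
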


\begin{proof}
	If these equalities all hold, then from Lemma \ref{lem:int} we would have $\o(ABC)=1$, contradicting the (4,3) property.
\end{proof}

A natural question is whether Lemmas \ref{lem:doubleintersection3} and \ref{lem:int'} are sufficient to get a $(4,3)$-theorem for \PO's, i.e., to decompose the vertices of the corresponding oriented hypergraph into a bounded number of independent sets, i.e., to bound its chromatic number?
The following claim answers this question negatively, as it shows that the size of the largest independent set can be about as small as the square root of the number of vertices.

\begin{claim}\label{cl:sqrtncliquehypergraph}
 For all $k\ge2$, there exists a \PO on $k^2$ elements for which the $(4,3)$ property and the property guaranteed by Lemma \ref{lem:doubleintersection3} hold, but its largest independent set has size $2k-1$.
\end{claim}

\begin{proof}
    Throughout the proof, we will use hypergraph terminology.

	Let the set of vertices be $\left\lbrace\left(i,j\right)\vert 1\le i\le k, 1\le j\le k, i,j\in\mathbb{Z}\right\rbrace$ and for any vertex $v$, use the notation $v=\left(i_V,j_V\right)$.
 
    For three vertices $X$, $Y$ and $Z$, let $\o(XYZ)=0$ if and only if any of the following three conditions holds:
	
	a) $i_X=i_Y=i_Z$.
	
	b) $i_X\neq i_Y\neq i_Z\neq i_X$.

	c) $i_X<i_Y=i_Z$ for some permutation of $X$, $Y$ and $Z$.
	
	This implies that $\o(XYZ)\ne 0$ if and only if $i_X=i_Y<i_Z$ for some permutation of $X$, $Y$ and $Z$.	
    We define $\o(XYZ)=1$ if $i_X=i_Y<i_Z$ and $j_X<j_Y$, and cyclically extend this to other triples to obtain a partial orientation of all triples.

    The structure we defined is an orientation per definition, thus we only have to prove that it satisfies the interiority condition (meaning that it is a \PO), the $(4,3)$ property and the property described in Lemma \ref{lem:doubleintersection3}. 
	
	First, we will prove that for all quadruples $A=\left(i_A,j_A\right)$, $B=\left(i_B,j_B\right)$, $C=\left(i_C,j_C\right)$, $D=\left(i_D,j_D\right)$, there are at most three hyperedges, proving the (4,3) property, and we will simultaneously show that the interiority condition vacuously holds. We can assume $i_A\le i_B\le i_C\le i_D$ and we partition the possible equalities into the following four cases.
	
	\begin{itemize}
	    \item If $i_A=i_B=i_C=i_D$, $i_A<i_B=i_C=i_D$, or $i_A< i_B<i_C\le i_D$, then $A$, $B$, $C$ and $D$ span zero hyperedges.
	    \item If $i_A<i_B=i_C<i_D$, then there is just one hyperedge, $BCD$.
	    \item If $i_A= i_B<i_C\le i_D$, then there are two hyperedges, $ACD$ and $BCD$.
	    \item If $i_A=i_B=i_C<i_D$, then there are three hyperedges (only $ABC$ is missing), and they do not fulfill the premise of the interiority property.
	\end{itemize}	

    As in the first three cases the vertices span less than three vertices, they also cannot fulfill the premise of the interiority property, so it always holds.
	This finishes the proof that our hypergraph is a \PO that has the $(4,3)$ property.\\

Now we prove that the hypergraph also fulfills the criterion of Lemma \ref{lem:doubleintersection3}.

Assume that $\mathcal{A}$, $\mathcal{B}$ and $\mathcal{C}$ are subfamilies for which any two-colored triple (two vertices from one of $\mathcal{A}$, $\mathcal{B}$ and $\mathcal{C}$ and a third vertex from a different one of $\mathcal{A}$, $\mathcal{B}$ and $\mathcal{C}$) always has orientation $0$. Now take vertices $A,A'\in\mathcal{A}$, $B,B'\in\mathcal{B}$ and $C,C'\in\mathcal{C}$ for which $\o(ABC)\neq0\neq\o(A'B'C')$; we will prove that $\o(ABC)=\o(A'B'C')$. First, we may assume, without loss of generality, that $\min\left(i_A,i_B,i_C\right)\le\min\left(i_{A'},i_{B'},i_{C'}\right)$ and (since $\o(ABC)\neq0$, meaning that exactly two of $i_A$, $i_B$ and $i_C$ are minimal) we may also assume, without loss of generality, that $i_A=i_B=\min\left(i_A,i_B\right)<i_C$ (otherwise we may apply a permutation on $\mathcal{A}$, $\mathcal{B}$ and $\mathcal{C}$). If any of $i_{A'}$ or $i_{B'}$ is larger than $i_A=i_B$, then $\o(ABA')$ or $\o(ABB')$ is non-zero, contradicting the assumption on $\mathcal{A}$, $\mathcal{B}$ and $\mathcal{C}$. Thus, since out of $i_{A'}$, $i_{B'}$ and $i_{C'}$, there are also exactly two minimal elements, $i_A=i_B=i_{A'}=i_{B'}$. But if either $A\neq A'$ or $B\neq B'$ holds, this would mean that, say, $\o(AA'C)\ne 0$, contradicting the assumption on the two-colored triples of $\mathcal{A}$, $\mathcal{B}$ and $\mathcal{C}$. Thus, $A=A'$ and $B=B'$ meaning that $\o(ABC)=\o(A'B'C')$, finishing the statement.
	
	Now suppose for a contradiction that there exists an independent set $\kappa$ of size $2k$. By the pigeonhole principle, there are at least two first coordinates, $i_1<i_2$, which belong to at least two vertices from $\kappa$, otherwise there would be at most $(k-1)\cdot 1+k$ vertices in $\kappa$. 
    Then any two vertices with first coordinate $i_1$ along with a vertex with first coordinate $i_2$ form a hyperedge, contradicting $\kappa$ being an independent set.	
	This proves that the size of the largest independent set is at most $2k-1$; an independent set of this size indeed exists: $\left\lbrace (i,j)\vert i=k\right\rbrace\cup\left\lbrace (i,j)\vert j=k\right\rbrace$.
\end{proof}

\begin{remark}\label{rem:sqrtncliquehypergraph}
The construction used in Claim \ref{cl:sqrtncliquehypergraph} is a \PO, but it is not a \CPO (for $k\ge4$) and not a \pPO (for $k\ge2$). 
\end{remark}

\begin{proof}
We already proved in Claim \ref{cl:sqrtncliquehypergraph} that the construction is a \PO.

Also, it is not a p-P3O for $k\ge2$, because two triples from $(1,1)$, $(1,2)$, $(2,1)$ and $(2,2)$ should be on a line, while the other two triples should not be, a contradiction with collinearity.

It is also not a \CPO: as any family of convex sets with the $(4,3)$ property can be stabbed by at most $9$ points \cite{McG}, at least one of these points intersects at least $k^2/9$ of these sets, which correspond to an independent set of size at least $k^2/9$ in the \CPO, which is a contradiction for $k\ge18$.
\end{proof}

In the rest of this section we study a digraph constructed from a \CPO with the (4,3) property by taking the `trace' of hyperedges with a common vertex.
To be more precise, define $G_D$ to be the directed graph whose vertices are the elements of some \CPO $H$, except for one element $D$, and $AB$ is an edge of $G_D$ if and only if $\o(ABD)=1$ in $H$. It follows from Lemma \ref{lem:int'} that the $(4,3)$ property implies that there are no directed $3$-cycles ($C_3$'s) in $G_D$.
We can even prove that the girth is at least five.

\begin{claim}\label{claim:4cycle}
	There is no directed 4-cycle in $G_D$.
\end{claim}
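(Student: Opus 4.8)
The plan is to assume, for contradiction, that $G_D$ contains a directed $4$-cycle $A_1A_2A_3A_4$, which by definition means $\o(A_1A_2D)=\o(A_2A_3D)=\o(A_3A_4D)=\o(A_4A_1D)=1$. The first step is to pin down the orientations of the two diagonals using nothing but Lemma~\ref{lem:int'} (i.e.\ that $G_D$ has no directed triangle). Applying it to the triple $A_1,A_2,A_3$ around $D$ rules out $\o(A_3A_1D)=1$, so $\o(A_1A_3D)\in\{0,1\}$; applying it to $A_3,A_4,A_1$ rules out $\o(A_1A_3D)=1$. Together these force $\o(A_1A_3D)=0$, and the symmetric argument applied to $A_2,A_3,A_4$ and to $A_4,A_1,A_2$ forces $\o(A_2A_4D)=0$. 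Thus both diagonals of the cycle are $0$-edges.

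The second step is to bring in the $(4,3)$ property, applied to the quadruple $\{A_1,A_2,A_3,A_4\}$: at least one of its four triples must be a $0$-edge. The small combinatorial observation that makes everything fit is that every $3$-subset of a $4$-set contains at least one of the two diagonal pairs $\{A_1,A_3\}$ and $\{A_2,A_4\}$ — to miss both one would have to delete two elements. Hence the $0$-triple guaranteed by $(4,3)$ always shares a pair with one of the two diagonal $0$-edges produced in the first step.

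The final step applies Lemma~\ref{lem:doubleintersecton} along that shared pair. For example, if $\o(A_1A_2A_3)=0$, then together with $\o(A_1A_3D)=0$ we have two $0$-triples on the common pair $\{A_1,A_3\}$; since $\o(A_1A_2D)=1\neq0$ and $\o(A_3A_2D)=-1\neq0$, the lemma yields $\o(A_1A_2D)=\o(A_3A_2D)$, that is $1=-1$, a contradiction. I would then check that the same computation, with the pair and the two cycle-edges permuted appropriately, disposes of each of the three remaining cases $\o(A_2A_3A_4)=0$, $\o(A_1A_2A_4)=0$, and $\o(A_1A_3A_4)=0$. Since $(4,3)$ guarantees one of these four cases occurs, this exhausts all possibilities and finishes the proof.

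I expect the only genuine content to lie in the first step — squeezing each diagonal orientation down to $0$ by invoking interiority once from each side — together with the coverage observation that the two diagonal pairs meet every triple of $\{A_1,A_2,A_3,A_4\}$; after that the argument is a routine, if slightly tedious, four-case application of Lemma~\ref{lem:doubleintersecton}. The main thing to be careful about is the sign bookkeeping when reordering the arguments of $\o$ in the last step, since a single mistaken sign would collapse the $1=-1$ contradiction and the whole case analysis would have to be rechecked.
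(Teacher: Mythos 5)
Your proof is correct and follows essentially the same route as the paper's: both establish that the two diagonals $A_1A_3$ and $A_2A_4$ are $0$-edges via the no-directed-triangle property (Lemma~\ref{lem:int'}), then apply Lemma~\ref{lem:doubleintersecton} along a diagonal to show each triple of $\{A_1,A_2,A_3,A_4\}$ being a $0$-edge yields $1=-1$, contradicting the $(4,3)$ property. The only difference is organizational: the paper concludes that all four triples are nonzero and then invokes $(4,3)$, while you invoke $(4,3)$ first and run the four-case check, which is the same argument in contrapositive form.
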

\begin{proof}
	Suppose that $A_1A_2A_3A_4$ is a directed 4-cycle in $G_D$.
	Then $A_1A_3$ cannot be an edge, or we would have a directed 3-cycle.
	If $A_1A_2A_3$ is not a hyperedge in $H$, that would contradict Lemma \ref{lem:doubleintersection} for $A=A_1, B=A_3, C=A_2, D=D$.
	We can similarly argue for the other indices, which implies that $A_1,A_2,A_3,A_4$ contradict the (4,3) property.
\end{proof}

In light of the above it is natural to conjecture that $G_D$ is always acyclic, however, this is not the case.

\begin{claim}
	$G_D$ can contain a directed 5-cycle.
\end{claim}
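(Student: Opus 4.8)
The plan is to construct an explicit family of six pairwise intersecting convex sets $A_1,\dots,A_5,D$ that has the $(4,3)$ property and satisfies $\o(A_iA_{i+1}D)=1$ for every $i$ (indices taken mod~$5$); then $A_1A_2A_3A_4A_5$ is a directed $5$-cycle in $G_D$. The guiding picture comes from points: if $p_1,\dots,p_5$ are the vertices of a convex pentagon listed counterclockwise and $d$ is any interior point, then $\o(p_ip_{i+1}d)=1$ for all $i$, because $d$ lies to the left of each directed edge $p_ip_{i+1}$. So the order type of ``five points around a centre'' already realises a directed $5$-cycle in $G_d$. This configuration is in general position, hence has no $0$-triple and grossly violates $(4,3)$; the whole task is to reintroduce $0$-triples (triple intersections) without destroying the five cyclic orientations.

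First I would decide, combinatorially, which triples should become $0$. Three constraints must be respected: (a) the five cyclic edges force $A_i\cap A_{i+1}\cap D=\emptyset$, so no $\o(A_iA_{i+1}D)$ may be $0$; (b) the five sets $A_i$ must have no common point, since such a point would make the configuration degenerate and $G_D$ acyclic; and (c) every four-element subset of $\{A_1,\dots,A_5,D\}$ must contain a $0$-triple. A symmetric choice meeting all three is to declare the five ``arc'' triples $A_{i-1}A_iA_{i+1}$ and the five ``diagonal'' triples $A_iA_{i+2}D$ to be $0$. The coverage needed for (c) is then immediate: every index-triple of $\{1,\dots,5\}$ contains a pair at cyclic distance $2$, so the diagonal triples witness $(4,3)$ for all subsets containing $D$, while every four-element subset of the $A_i$ omits one index and hence contains an arc triple. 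Since the non-consecutive triples of the $A_i$ are left non-degenerate, Helly's theorem does not force a common point, so (b) survives.

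The main step is to realise this intersection pattern by actual convex sets, recorded in a figure: $D$ a small central set; the $A_i$ cyclically arranged so that each consecutive overlap $A_i\cap A_{i+1}$, together with the hub $A_{i-1}\cap A_i\cap A_{i+1}$, is pushed to the periphery away from $D$, while the diagonal pairs $A_i\cap A_{i+2}$ reach into $D$. Once such a family is drawn, verification is routine: it is pairwise intersecting, so by Lemma~\ref{lem:int} it is a \CPO satisfying the interiority condition automatically; the five equalities $\o(A_iA_{i+1}D)=1$ follow from $A_i\cap A_{i+1}\cap D=\emptyset$ (which makes them nonzero) together with the fact that they all share one sign (either by the symmetry of the construction or by direct inspection of the five hollows, the global sign being reversed by relabelling if necessary); and the $(4,3)$ property is exactly the count from the previous paragraph.

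I expect the geometric realisation to be the real obstacle. Convexity forces each pairwise intersection $A_i\cap A_{i+2}$ to be connected, so arranging the diagonal pairs to meet inside $D$ while keeping each consecutive pair and its hub outside $D$, and simultaneously preventing all five $A_i$ from sharing a point, is delicate. In particular the naive model with five overlapping circular wedges fails, because the central portions of consecutive wedges overlap and therefore meet $D$, collapsing a cyclic edge to $0$. The construction thus requires carefully shaped sets---each $A_i$ thin and reaching into $D$ only near its diagonal partners---and the technical heart of the proof is to exhibit such shapes and confirm that they create exactly the intended $0$-triples and no others.
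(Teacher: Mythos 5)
Your guiding picture (five sets arranged cyclically around a central set $D$, the five orientations $\o(A_iA_{i+1}D)=1$ forced to a common sign by symmetry, and the $(4,3)$ property restored by inserting triple intersections) matches the paper's, and your $(4,3)$ counting for your proposed zero-pattern is correct. But the zero-pattern itself cannot be realized by pairwise intersecting convex sets, so the figure you defer to---and which you yourself call the technical heart of the proof---does not exist. Concretely, look at $A_1,A_2,A_3,D$ in your design: you require a hub $h\in A_1\cap A_2\cap A_3$, a diagonal point $p\in A_1\cap A_3\cap D$, some $q\in A_2\cap D$ (pairwise intersection), and $A_1\cap A_2\cap D=A_2\cap A_3\cap D=\emptyset$. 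Applying Lemma~\ref{lem:hollow}(d) to the triple $(A_1,A_2,D)$ with witnesses $q\in A_2\cap D$, $p\in A_1\cap D$, $h\in A_1\cap A_2$ gives $\o(A_1A_2D)=\o(qph)$; applying it to $(A_3,A_2,D)$ with the \emph{same} three points gives $\o(A_3A_2D)=\o(qph)$. Hence $\o(A_1A_2D)=\o(A_3A_2D)=-\o(A_2A_3D)$, so two consecutive edges of your intended cycle must carry opposite signs and cannot both be $+1$. This is exactly Lemma~\ref{lem:doubleintersecton} (with $A=A_1$, $B=A_3$, $C=A_2$), the very lemma the paper invokes in the proof of Claim~\ref{claim:4cycle}; your plan checks interiority and $(4,3)$ but never this constraint, and it kills the construction at every vertex of the cycle.

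The repair is to flip which triples of the $A_i$ are degenerate: keeping your (correct, $(4,3)$-motivated) choice that the diagonal triples $A_iA_{i+2}D$ are $0$, Lemma~\ref{lem:doubleintersecton} \emph{forces} the arc triples $A_{i-1}A_iA_{i+1}$ to be nonzero, and instead the non-consecutive triples of the $A_i$ become the remaining $0$-triples; the $(4,3)$ property then holds by essentially your counting with the two kinds of $A$-triples exchanged. This is precisely the paper's construction: the five ``corner'' triangles $A_i$ of a regular pentagon (each spanned by a vertex and its two neighbours) together with the inscribed circle $D$ (Figure~\ref{fig:pentagon}). There, consecutive sets $A_i,A_{i+1}$ meet only in a single pentagon vertex lying outside $D$, so $\o(A_iA_{i+1}D)\ne 0$ with a common sign by symmetry; $A_i\cap A_{i+2}$ is a region along a pentagon side that reaches inside the inscribed circle, giving $\o(A_iA_{i+2}D)=0$; and non-consecutive triples of the $A_i$ share a pentagon vertex. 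So your anticipated difficulty was not merely technical: no amount of careful shaping can realize your pattern, whereas once the arc triples are allowed to be nondegenerate, an explicit realization is short. (Even setting this aside, note that as submitted the argument is incomplete, since the existence of the realizing family is asserted, not proved.)
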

\begin{proof}
	Take a pentagon $v_1v_4v_2v_5v_3$ with inscribed circle $D$. The set $A_i$ will be the triangle $v_{i+2}v_iv_{i-2}$ where indices are mod 5 (see Figure \ref{fig:pentagon}).
	
	\begin{figure}[h]
		\centering
		\input{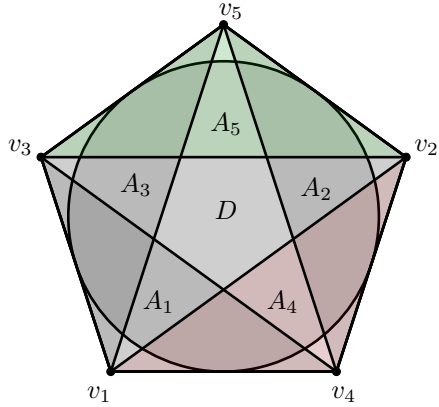}    
		\caption{An example of $G_D$ containing a directed 5-cycle. $D$ is the disk in the center, triangle $A_4$ is shaded in red, and triangle $A_5$ is shaded in green.}\label{fig:pentagon}
	\end{figure}
	
	For these sets $\o(A_iA_{i+1}D)=\o(A_iA_{i+1}A_{i+2})=1$, where indices are to be understood mod 5, and all other triples have orientation $0$ in $H$.
	The set $A_i$ will be the triangle $v_{i+2}v_iv_{i-2}$ where indices are mod 5. Thus, $A_1A_2A_3A_4A_5$ is a $5$-cycle in $G_D$.
\end{proof}

\section{Open problems}\label{sec:discussion}
Our definition of orientation can be generalized to intersecting pseudo-disk arrangements and to $d+1$ convex sets in $\R^d$.
We leave these for future research, just like the following questions left open in this paper.

\begin{prob}
    Are all \CPO's and/or \CTO's extendable by adding one more element?
\end{prob}

\begin{prob}
    Are all 6-point order types \CTO's, or that two that we could not realize in Figure \ref{fig:ordertypes} are not?
\end{prob}

\begin{prob}
    What further properties of \CPO's are needed to obtain efficient $(p,q)$-theorems?
\end{prob}

\subparagraph*{Acknowledgments.}
~\\
We would like to thank N\'ora Frankl and M\'arton Nasz\'odi for discussions during the project and a reviewer for their useful suggestions.


\end{document}